\newtheorem{theorem}{Theorem}[section]
\newtheorem*{theorem_A}{Theorem A}
\newtheorem*{theorem_B}{Theorem B}
\newtheorem*{theorem_C}{Theorem C}
\newtheorem*{theorem_D}{Theorem D}
\newtheorem*{conjecture_Z}{Zariski's Conjecture}
\newtheorem{lemma}[theorem]{Lemma}
\newtheorem{corollary}[theorem]{Corollary}
\newtheorem{proposition}[theorem]{Proposition}
\newtheorem{definition}[theorem]{Definition}
\newtheorem{conjecture}{Conjecture}
\newtheorem{claim}{Claim}
\newtheorem{remark}[theorem]{Remark}
\newcommand{\C}{\mathbb{C}}
\begin{document}
\def\blfootnote{\gdef\@thefnmark{}\@footnotetext}

\title[On the Topological Invariance of the Algebraic Multiplicity]{On the topological invariance of the algebraic multiplicity of holomorphic foliations}
\author[L. M. C\^{a}mara]{Leonardo M. C\^{a}mara}
\author[F. P.P. Reis]{Fernando Reis}
\author[J. E. Sampaio]{Jos\'e Edson Sampaio}
\address[Leonardo M. C\^{a}mara]{ Departamento de
Matem\'atica, Universidade Federal do Espírito Santo. Av. Fernando Ferrari,
514 - Goiabeiras, Vit\'oria - ES, Brazil, CEP 29075-910. Email: \texttt{leonardo.camara@ufes.br}}
\address[Fernando Reis]{Instituto Politécnico, Universidade Federal do Rio de Janeiro. Avenida Aluízio da Silva Gomes, 50 – Granja dos Cavaleiros, Macaé – RJ, Brazil, CEP 27930-560. Email: \texttt{fernandoreis@ipoli.macae.ufrj.br} }
\address[Jos\'e Edson Sampaio]{ Departamento de Matem\'atica, Universidade
Federal do Cear\'a, Rua Campus do Pici, s/n, Bloco 914, Pici, 60440-900,
Fortaleza-CE, Brazil. E-mail: \texttt{edsonsampaio@mat.ufc.br} }
\subjclass[2010]{(primary) 32M25; 32S65; (secondary) 58K45}
\thanks{The last named author was partially supported by CNPq-Brazil grant
310438/2021-7. This work supported by the Serrapilheira Institute (grant
number Serra -- R-2110-39576). }
\thanks{The authors have also been partially supported by CAPES/PRAPG grant nº 88881.964878/2024-01}
\maketitle



\begin{abstract}
In this paper, we address one of the most basic and fundamental problems in the theory of foliations and ODEs, the topological invariance of the algebraic multiplicity of a holomorphic foliation. For instance, we prove an adapted version of A'Campo-L\^e's Theorem for foliations, i.e., the algebraic multiplicity equal to one is a topological invariant in dimension two. This result is further generalized to higher dimensions under mild conditions; as a consequence, we prove that saddle-nodes are topologically invariant. We prove that the algebraic multiplicity is a topological invariant in several classes of foliations that contain, for instance, the generalized curves and the foliations of second type. Additionally, we address a fundamental result by Rosas-Bazan, which states that the existence of a homeomorphism extending through a neighborhood of the exceptional divisor of the first blow-up implies the topological invariance of the algebraic multiplicity. We show that the result holds if the homeomorphism extends locally near a singularity, even if it does not extend over the entire divisor.
\end{abstract}

\tableofcontents

\section{Introduction}

Let $f\colon(\mathbb{C}^{n},0)\rightarrow(\mathbb{C},0)$ be the germ of a
reduced holomorphic function at the origin and $(V(f),0)$ be the germ of the
zero set of $f$ at the origin, i.e., $V(f)=f^{-1}(0)$. Then the \emph{multiplicity} of $V(f)$ at the origin, denoted by $m(V(f),0)$, is defined in
the following way: we write 
\begin{equation*}
f=f_{m}+f_{m+1}+\cdots+f_{k}+\cdots 
\end{equation*}
where each $f_{k}$ is a homogeneous polynomial of degree $k$ and $f_{m}\neq0$. Then $m(V(f),0):=m.$ We also say that $f_{m}$ is the \emph{initial part}
of $f$, and denote it by $\mathrm{in}(f)$. In 1971 Zariski (see \cite{Zariski:1971}) asked a question whose stated version is known as Zariski's multiplicity conjecture.

\begin{conjecture_Z}
Let $f_{1},f_{2}\colon(\mathbb{C}^{n},0)\rightarrow(\mathbb{C},0)$ be two
reduced complex analytic functions. If there exists a homeomorphism $\varphi\colon(\mathbb{C}^{n},V(f_{1}),0)\rightarrow\allowbreak(\mathbb{C}^{n},V(f_{2}),0)$ then $m(V(f_{1}),0)=m(V(f_{2}),0)$?
\end{conjecture_Z}

This is still an open problem; however, several authors have provided partial answers to it. For more information on the subject, we refer to the nice survey \cite{Eyral:2007}. One interesting partial answer that we want to emphasize here was provided by A'Campo in \cite{Acampo:1973} and L\^{e} in \cite{Le:1973}. They showed the following:

\begin{theorem}[A'Campo-L\^{e}'s Theorem]
Let $f_{1},f_{2}\colon(\mathbb{C}^{n},0)\rightarrow (\mathbb{C},0)$ be two
reduced complex analytic functions. Assume that there is a homeomorphism $\varphi\colon(\mathbb{C}^{n},V(f_{1}),0)\rightarrow (\mathbb{C}^{n},V(f_{2}),0)$. If $m(V(f_{1}),0)=1$ then $m(V(f_{2}),0)=1$.
\end{theorem}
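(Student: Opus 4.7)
The plan is to extract topological invariants of the embedded germ that detect the multiplicity-one case, combining the Milnor fibration with A'Campo's Lefschetz number formula. The argument has three ingredients: a topological reconstruction of the local monodromy from the embedded pair, a trivial computation in the smooth case, and A'Campo's theorem to rule out higher multiplicity.

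For each $f_i$, I would set up Milnor's fibration $f_i/|f_i|\colon S^{2n-1}_\epsilon\setminus K_i \to S^1$, with Milnor fiber $F_i$ and geometric monodromy $h_i\colon F_i\to F_i$. The key topological observation is that the pair $(F_i,h_i)$, up to $\Z$-equivariant homotopy equivalence, can be reconstructed purely from the embedded pair $(B_\epsilon,V(f_i),0)$. Indeed, since $V(f_i)$ has real codimension two, Alexander duality provides a canonical linking-number homomorphism $\pi_1(B_\epsilon\setminus V(f_i))\to\Z$, and the associated infinite cyclic cover is homotopy equivalent to $F_i$ with deck transformation realizing $h_i$. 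Because $\varphi$ is a homeomorphism of pairs, it induces a $\Z$-equivariant homotopy equivalence between the two cyclic covers, hence an isomorphism $H_*(F_1;\Q)\cong H_*(F_2;\Q)$ intertwining the monodromies. In particular, the Lefschetz number $\Lambda(h):=\sum_j(-1)^j\operatorname{tr}(h_*|H_j(F;\Q))$ satisfies $\Lambda(h_1)=\Lambda(h_2)$.

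On the source side, $m(V(f_1),0)=1$ allows me to assume, after a local biholomorphism, that $f_1$ is a linear coordinate; then $F_1$ is a disk, $h_1$ is isotopic to the identity, and $\Lambda(h_1)=\chi(F_1)=1$. On the target side, I would invoke A'Campo's theorem from \cite{Acampo:1973}, which asserts that $\Lambda(h)=0$ whenever $m(V(f),0)\geq 2$ (a consequence of A'Campo's formula $\zeta_h(t)=\prod_j(1-t^{N_j})^{-\chi(E_j^\circ)}$ applied to an embedded resolution whose first step is the blow-up of the origin). Combining the two, $\Lambda(h_2)=1\neq 0$ forces $m(V(f_2),0)=1$, which is the desired conclusion.

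The main obstacle is the topological reconstruction of the monodromy in the first step: while the homotopy type of $F_i$ itself follows from fairly standard arguments, identifying the deck transformation of the cyclic cover with the geometric monodromy up to isotopy, and verifying that $\varphi$ preserves linking numbers (orientation-sensitive data which must be compared across the homeomorphism), both require some care. Once this invariance is established, the remaining steps are an essentially direct application of classical computations.
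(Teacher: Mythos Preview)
The paper does not actually prove this statement: the A'Campo--L\^e Theorem is quoted in the introduction as a classical result from \cite{Acampo:1973} and \cite{Le:1973}, with no proof given, and serves only as motivation for the paper's foliated analogues (Theorems~A--D). So there is no ``paper's own proof'' to compare against.

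That said, your sketch is essentially the classical argument. L\^e's contribution in \cite{Le:1973} is precisely the topological invariance of the local monodromy action on $H_*(F;\Q)$ (via the infinite cyclic cover, as you outline), and A'Campo's contribution in \cite{Acampo:1973} is the vanishing of $\Lambda(h)$ when $m\geq 2$. Your identification of the orientation/linking-number issue as the delicate point is accurate; one handles it by noting that reversing the generator of the covering group replaces $h_*$ by $h_*^{-1}$, which has the same trace on each $H_j$, so $\Lambda(h)$ is unaffected. With that remark, your outline is complete and matches the original proofs.
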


A'Campo-L\^e's Theorem has several applications, for example, see \cite{Sampaio:2020}.

The aim of this paper is to present some contributions to a version for foliations of Zariski's multiplicity conjecture and, in particular, we present here some versions for foliations of A'Campo-L\^e's Theorem. But, in order to state these results, we need some preliminaries.

Let $U$ be an open neighbourhood of $0\in\mathbb{C}^{n}$ and $X$ be a
holomorphic vector field defined in $U$ such that 
\begin{equation*}
X(z)=\sum_{i=1}^{n}f_{i}(z)\frac{\partial}{\partial z_{i}},\quad\gcd
(f_{1},...,f_{n})=1: 
\end{equation*}
We define the \emph{algebraic multiplicity} of $X$ at $0$, denoted by $\nu
(X,0)$, to be $\nu(X,0):=\min\{\mathrm{ord}(f_{i}):\,i=1,...,n\}$. It is
well-known that the integral curves of $X$ define a complex foliation by curves on $U$, denoted by $\mathcal{F}^{X}$, with singularities at the
zeros of $X$. Then we define the \emph{algebraic multiplicity} of $\mathcal{F}_{X}$ at $0$ by $\nu(\mathcal{F}^{X},0)=\nu(X,0)$. Let $X,Y\colon (\mathbb{C}^{n},0)\rightarrow(\mathbb{C}^{n},0)$ be two holomorphic vector fields with
isolated singularities. We say that $\mathcal{F}$ is a holomorphic foliation by curves if $\mathcal{F}=\mathcal{F}^{X}$ for some holomorphic vector field 
$X$. We say that $\mathcal{F}^{X}$ and $\mathcal{F}^{Y}$ are \emph{topologically equivalent} (resp. \emph{bi-Lipschitz equivalent}) if there
exists a homeomorphism (resp. bi-Lipschitz homeomorphism) $\varphi\colon(\mathbb{C}^{n},0)\rightarrow (\mathbb{C}^{n},0)$ that sends each integral
curve of $X$ onto an integral curve of $Y$. In this case, we say also that $\varphi$ is a \emph{topological equivalence} (resp. \emph{bi-Lipschitz
equivalence}) between $\mathcal{F}^{X}$ and $\mathcal{F}^{Y}$.

So, the following version of Zariski's multiplicity conjecture for holomorphic foliations becomes natural.

\begin{conjecture}\label{conj:mattei_conj}
Let $\mathcal{F}$ and $\mathcal{G}$ be two topologically equivalent holomorphic foliations at $(\mathbb{C}^{n},0)$. Then $\nu(\mathcal{F},0)=\nu(\mathcal{G},0)$.
\end{conjecture}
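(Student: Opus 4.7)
The plan is to approach Conjecture \ref{conj:mattei_conj} through a chain of reductions: first reduce from arbitrary dimension to surfaces, then from surfaces to a question about lifting the topological equivalence through the first blow-up, and finally reduce that lifting question to the multiplicity-one case where an adapted A'Campo-L\^e-type argument should apply.

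First, I would address the surface case $n=2$. Writing $\mathcal{F}=\mathcal{F}^X$ and $\mathcal{G}=\mathcal{F}^Y$, let $\pi:(\tilde{\mathbb{C}}^{2},E)\to(\mathbb{C}^{2},0)$ be the blow-up at the origin. The algebraic multiplicity $\nu(\mathcal{F},0)$ is encoded in the pair $(\pi^{*}\mathcal{F},E)$: in the dicritical case (when $E$ is non-invariant) one reads it off the tangency order of $\pi^{*}\mathcal{F}$ with $E$, while in the non-dicritical case one reads it off a weighted count of singularities of $\pi^{*}\mathcal{F}$ along $E$ together with their indices (Camacho-Sad). Consequently, if the topological equivalence $\varphi$ can be extended to a homeomorphism between neighborhoods of the two exceptional divisors, matching the dicritical/non-dicritical status and the discrete combinatorial data around $E$, one obtains $\nu(\mathcal{F},0)=\nu(\mathcal{G},0)$ by a Rosas-Bazan-type criterion.

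Second, to produce such an extension I would exploit the fact that $\varphi$ carries leaves to leaves and thus respects both the separatrix set and the holonomy pseudogroup along transverse disks. In the non-dicritical case, the separatrix set is an analytic curve germ $S\subset(\mathbb{C}^{2},0)$; restricting $\varphi$ to $S$ yields a homeomorphism of plane curve germs, and by the classical A'Campo-L\^e Theorem one has $m(S,0)=m(\varphi(S),0)$. Since for a large class of foliations (generalized curves) this common multiplicity equals $\nu(\mathcal{F},0)+1$, this already handles a wide range. For the general setting, I would proceed inductively on the depth of the Seidenberg desingularization tree: the base case is $\nu=1$, where the initial part of $X$ is a linear vector field and the tangent-cone dynamics dominate, giving a direct A'Campo-L\^e analogue for foliations; the inductive step then follows by applying the same argument to $\pi^{*}\mathcal{F}$ once $\varphi$ has been lifted.

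Third, for $n\ge 3$ the slicing strategy is not directly available, since the restriction of a foliation by curves to a generic two-plane is not a foliation. My proposal is instead to work on the projectivized tangent cone at $0$ and use a generic separatrix $\gamma$ through $0$: the holonomy of $\mathcal{F}$ along a small loop around $\gamma$ in a transverse slice yields a two-dimensional holonomy picture preserved by $\varphi$, allowing a bootstrap from the surface case. The main obstacle is the very first step, the extension of $\varphi$ through the first blow-up. This is the same obstruction that keeps Zariski's original conjecture open, and it is particularly delicate in the dicritical case, where one must simultaneously control a one-parameter family of separatrices whose topological types may vary. I expect to be stuck exactly here, which is presumably why the main theorems announced in the abstract are stated under additional hypotheses such as being a generalized curve, of second type, or having $\varphi$ extendable on part of the exceptional divisor.
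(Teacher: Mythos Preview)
The statement you are addressing is Conjecture~\ref{conj:mattei_conj}, which the paper explicitly declares to be an \emph{open problem}: ``Although this conjecture remains an open problem, it has some partial positive answers\ldots''. There is therefore no proof in the paper to compare your proposal against; the paper's main results (Theorems~A--D) are partial results obtained under additional hypotheses precisely because the full conjecture is not known.

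Your proposal is accordingly not a proof but a strategy sketch, and you yourself recognize this in the final paragraph (``I expect to be stuck exactly here''). The concrete obstruction you identify---lifting the topological equivalence $\varphi$ through the first blow-up---is indeed the heart of the difficulty, and nothing in your outline overcomes it. In particular: your inductive step presupposes that $\varphi$ has already been lifted to a neighbourhood of the exceptional divisor in order to apply the same argument to $\pi^{*}\mathcal{F}$, but producing that lift is exactly what is unknown; the A'Campo--L\^e argument on the separatrix curve $S$ controls only $m(S,0)$, and the relation $m(S,0)=\nu(\mathcal{F},0)+1$ you invoke is itself a nontrivial theorem valid only for nondicritical second-type foliations (Mattei--Salem), not in general; and the higher-dimensional bootstrap via holonomy along a generic separatrix is purely heuristic and does not produce a two-dimensional foliation on which to run the surface argument. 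In short, the proposal correctly locates the key obstacles but resolves none of them, which is consistent with the conjecture being open.
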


According to Rozas-Bazan \cite{Rosas:2009}, the above conjecture when $n=2$ was also proposed by J.-F. Mattei.

From our point of view, this conjecture is one of the most basic and fundamental open problem in theory of holomorphic foliations and ODEs. 
Although this conjecture remains an open problem, it has some partial positive answers, for instance:

\begin{enumerate}
\item when $n=2$ and the foliations are generalized curves (see \cite{CamachoLS:1984}).
\item when $n=2$ and the homeomorphism that gives the topological equivalence and its inverse are differentiable at the origin (see \cite{Rosas:2009});
\item when the homeomorphism that gives the topological equivalence is a $C^1$ diffeomorphism (see \cite{Rosas:2010});
\item when $n=2$ and the homeomorphism that gives the topological equivalence is bi-Lipschitz (see \cite{Rosas:2016});
\item when $n=2$ and the involved foliations are of second type and having only convergent separatrices (see \cite{GenzmerM:2018} and \cite{MatteiS:2004}).
\end{enumerate}

Recall that a foliation $\mathcal{F}$ is of {\it second type} if in the reduction process the germ of the divisor $D$ does not contain the weak invariant curve of some saddle-node. 

Let $\pi \colon \widehat{\mathbb{C}^2}\to \mathbb{C}^{2}$ be the (quadratic) blowing up at $0$. Recall also that a foliation $\mathcal{F}$ at $(\C^2,0)$ is {\it dicritical} if the strict transform foliation $\pi^{\ast} \mathcal{F}$ does not leave the exceptional divisor $\pi^{-1}(0)$ invariant. If $\mathcal{F}$ is not dicritical, we say that $\mathcal{F}$ is {\it nondicritical}.

Here, we say that a foliation $\mathcal{F}$ is {\it strictly nondicritical} if it has a finite number of separatrices.

In this article, we prove the following result on the topological invariance of the algebraic multiplicity, which generalizes the results above mentioned from \cite{CamachoLS:1984} and \cite{MatteiS:2004}.

\begin{theorem_A}
Let $\mathcal{F}$ and $\mathcal{G}$ be two topologically equivalent
holomorphic foliations at $(\mathbb{C}^{2},0)$.  Assume that $\pi^*\mathcal{F}$ (resp. $\pi^*\mathcal{G}$) has a singularity $p\in \pi^{-1}(0)$ (resp. $q\in \pi^{-1}(0)$) such that $\pi^*\mathcal{F}|_U$ (resp. $\pi^*\mathcal{G}|_W$) is a strictly nondicritical second type foliation with only convergent separatrices for some open neighborhood $U\subset \widehat{\mathbb{C}^2}$ (resp. $W\subset \widehat{\mathbb{C}^2}$). Then $\nu(\mathcal{F},0)=\nu(\mathcal{G},0)$.
\end{theorem_A}

We also prove the following version for foliations of A'Campo-L\^e's Theorem.

\begin{theorem_B}
Let $\mathcal{F}$ and $\mathcal{G}$ be two topologically equivalent
holomorphic foliations at $(\mathbb{C}^{2},0)$. Then $\nu (\mathcal{F})=1$ if and only if $\nu (\mathcal{G})=1$.
\end{theorem_B}

Aside the results from Rudy-Bazan's paper \cite{Rosas:2010}, almost nothing is known about Conjecture \ref{conj:mattei_conj} in dimensions greater than 2.
In this paper, we also present some contributions to Conjecture \ref{conj:mattei_conj} for higher dimensions. One of them is the following result, which can also be seen as a version for foliations of A'Campo-L\^e's Theorem. 
In the following statement $\mathrm{Spect}(DA(0))$ is the set of eigenvalues of $DA(0)$ (the derivative of $A$ at $0$).

\begin{theorem_C}
Let $\mathcal{F}^{X}$ and $\mathcal{F}^{Y}$ be two topologically equivalent
holomorphic foliations at $(\mathbb{C}^{n},0)$ generated by the vector
fields $X$ and $Y$, respectively. If $\mathrm{Spect}(DX(0))\not =\{0\}$ then 
$\mathrm{Spect}(DY(0))\not =\{0\}$.
\end{theorem_C}

As a consequence, we obtain that saddle-nodes are topological invariants.
The origin is a \emph{saddle-node} for a holomorphic vector field $X$ if the
linear part of $X$, $DX(0)$, has at least one eigenvalue equal to zero and
at least one non-zero eigenvalue. We prove that if $\mathcal{F}^{X}$ and $\mathcal{F}^{Y}$ are two topologically equivalent holomorphic foliations at $(\mathbb{C}^{n},0)$, then $0$ is a saddle-node for $X$ if and only if $0$ is
a saddle-node for $Y$ (see Corollary \ref{cor:saddle-node}).

In Section \ref{sec:sufficient_cond}, we present several remarks and results on sufficient conditions for the invariance of algebraic multiplicity.

Another important result proved by Rosas-Bazan in \cite{Rosas:2010}, one that allowed him to give some partial results in this scenario, is the following:

\begin{theorem}[\protect\cite{Rosas:2010},Theorem 1.1]
Let $\mathcal{F}$ and ${\mathcal{G}}$ be two holomorphic foliations by
curves on open neighbourhoods $U$ and $V$, resp., of $0\in \mathbb{C}^{n}$.
Suppose that there is a topological equivalence $h\colon U\rightarrow V$
between $\mathcal{F}$ and $\mathcal{G}$ such that $h(0)=0$. If $\pi^{-1}\circ h\circ \pi\colon\pi^{-1}(U)\setminus
\pi^{-1}(0)\to\pi^{-1}(V)\setminus \pi^{-1}(0)$ extends to a homeomorphism $\tilde h \colon \pi^{-1}(U)\to\pi^{-1}(V)$ then $\nu(\mathcal{F},0)=\nu(\mathcal{G},0)$, where $\pi\colon \widehat{\mathbb{C}^n} \to\mathbb{C}^n$ is
the quadratic blowing up at $0 \in\mathbb{C}^n$.
\end{theorem}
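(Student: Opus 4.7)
My plan is to extract from the triple $(\pi^{-1}(U),\pi^*\mathcal{F},E_U)$, where $E_U:=\pi^{-1}(0)$, a topological invariant that computes $\nu(\mathcal{F},0)$, and then use the extension $\tilde h$ to equate it with the corresponding invariant of $\mathcal{G}$. The first step is to check that $\tilde h(E_U)=E_V$: since $\tilde h$ coincides with $\pi^{-1}\circ h\circ\pi$ on the open dense set $\pi^{-1}(U)\setminus E_U$, and that map sends into $\pi^{-1}(V)\setminus E_V$, continuity forces $\tilde h(E_U)\subset E_V$, and the symmetric argument for $\tilde h^{-1}$ yields equality. In particular, $\tilde h$ restricts to a homeomorphism between the exceptional divisors and, by continuity, takes leaves of $\pi^*\mathcal{F}$ to leaves of $\pi^*\mathcal{G}$ even across $E_U$, so it also preserves the singular set of these strict-transform foliations.

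Next, I would use the blow-up formula to isolate the numerical content of $\nu$. Writing $X=X_\nu+X_{\nu+1}+\cdots$ for a generator of $\mathcal{F}$ with $\nu=\nu(\mathcal{F},0)$, a direct computation in affine charts of $\widehat{\mathbb{C}^n}$ around $E_U$ gives $\pi^*X=u^{\nu-\varepsilon}\widetilde X$, where $u=0$ cuts out $E_U$, $\widetilde X$ generates $\pi^*\mathcal{F}$, and $\varepsilon\in\{0,1\}$ equals $1$ exactly when $\mathcal{F}$ is non-dicritical (equivalently, when $E_U$ is $\pi^*\mathcal{F}$-invariant). Thus $\nu(\mathcal{F},0)$ is recoverable from two ingredients: (a) whether $E_U$ is $\pi^*\mathcal{F}$-invariant, and (b) the order of tangency of $\pi^*\mathcal{F}$ with $E_U$, suitably corrected by Poincar\'e--Hopf/Camacho--Sad type contributions at the singularities of $\pi^*\mathcal{F}$ on $E_U$ in the non-dicritical case.

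Both ingredients should be topologically invariant under $\tilde h$: invariance of $E_U$ under the strict transform is a purely leaf-theoretic statement, preserved by any leaf-conjugacy, while the order of tangency at a generic regular point $p\in E_U$ can be read off by counting (with multiplicity) the intersections of leaves of $\pi^*\mathcal{F}$ with a small loop on a transversal disk at $p$. For $n=2$ one can additionally use the Milnor numbers of $\pi^*\mathcal{F}$ at its isolated singularities on $E_U$, which are topologically invariant for holomorphic foliations; higher-dimensional cases should reduce to $n=2$ by slicing through $0$ with generic complex $2$-planes and using that $\tilde h$ behaves well under such slicings. Transporting these invariants from $\mathcal{F}$ to $\mathcal{G}$ via $\tilde h$ would then force $\nu(\mathcal{F},0)=\nu(\mathcal{G},0)$. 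The main obstacle I foresee is precisely making the ``order of tangency'' ingredient and its non-dicritical correction genuinely \emph{topological} (rather than merely analytic), and verifying that the extension $\tilde h$, which is a priori only a topological equivalence away from $E_U$, respects these counts once it crosses the divisor; this is the true heart of the theorem and is where I expect to spend most of the effort.
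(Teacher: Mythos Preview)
This theorem is not proved in the present paper; it is quoted from Rosas-Bazan's work \cite{Rosas:2010} as background, so there is no proof here to compare your proposal against directly. What the paper \emph{does} prove in this direction is Theorem~D and its corollary in Section~\ref{sec:other_results}, which weaken the hypothesis: it suffices that $\pi^{-1}\circ h\circ\pi$ extend to a homeomorphism near a \emph{single} singularity $p$ of $\pi^*\mathcal{F}$ admitting a separatrix not contained in the divisor (and, for $n>2$, that $h$ be bi-Lipschitz).

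The paper's method is rather different from your outline. Instead of encoding $\nu$ via an order of tangency of $\pi^*\mathcal{F}$ with the divisor and Milnor-type corrections, it tracks the Camacho--Lins Neto--Sad index along the strict transform of a separatrix: Lemma~\ref{Lemma: Index} gives
\[
\mathrm{ind}_{p}(\pi^*\mathcal{F}|_{\widetilde C})=\mathrm{ind}_0(\mathcal{F}|_{C})-m(C,0)\cdot\widetilde\nu(\mathcal{F},0),
\]
with $\widetilde\nu$ equal to $\nu$ or $\nu-1$ according to dicriticality. Since $\mathrm{ind}_0(\mathcal{F}|_{C})$ is a topological invariant of the pair $(\mathcal{F},C)$ and $m(C,0)$ is a topological invariant of the embedded curve (when $n=2$, or in any dimension under a bi-Lipschitz hypothesis), a local extension of the conjugacy near $p$ preserves the left-hand side, and hence $\widetilde\nu$. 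This sidesteps precisely the obstacle you flagged: one never needs to argue that a tangency order along $E$ is topological, because the only invariants used are indices along one-dimensional separatrices, whose topological invariance is already established in \cite{CamachoLS:1984}. The price paid is the extra assumption that a separatrix through $p$ outside the divisor exists, and the bi-Lipschitz hypothesis in higher dimensions; your proposed route, if it could be completed, would not need these, but making the tangency count genuinely $C^0$-invariant across the divisor is exactly the hard analytic core of Rosas-Bazan's original argument and is not supplied by the present paper.
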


Let $\mathcal{F}$ and $\mathcal{G}$ be two holomorphic
foliations by curves at $(\mathbb{C}^n,0)$ determined by the vector fields $X$ and $Y$, respectively. Let $\pi\colon \widehat{\mathbb{C}^n} \to\mathbb{C}^n$ be
the quadratic blowing up at $0 \in\mathbb{C}^n$, let $E= \pi^{-1}(0)$ be the
exceptional divisor and let $\widetilde{\mathcal{F}}$ and $\widetilde{\mathcal{G}}$ be the strict transforms of $\mathcal{F}$ and $\mathcal{G}$ by 
$\pi$, respectively. Suppose that there is a topological equivalence $h\colon(\mathbb{C}^{n},0)\rightarrow(\mathbb{C}^{n},0)$ between $\mathcal{F}$
and $\mathcal{G}$. Let $\mathrm{Sep}_{0}(\mathcal{F})$ be the collection of
all irreducible separatrices of $\mathcal{F}$ at $0$. For each $C\in \mathrm{Sep}_{0}(\mathcal{F})$, let $\alpha_{C}$ be the Puiseux's parameterization
of $C$ (at $0$) and let $\beta_{C}$ be the Puiseux's parameterization of $h(C)$. We denote by $\widetilde{C}$ and $\widetilde{h(C)}$ the blowing up of 
$C$ and $h(C)$, respectively.

Here, we assume that $\mathrm{Sep}_{0}(\mathcal{F})\not=\emptyset$.


\begin{theorem_D}
Suppose that either $n=2$ or  $\phi$ is a bi-Lipschitz equivalence. Then the
following statements are equivalent:

\begin{description}
\item[(i)] $\nu(\mathcal{F},0)=\nu(\mathcal{G},0);$ 

\item[(ii)] $ \mathrm{ind}_p(\widetilde{\mathcal{F}}|_{\widetilde{C}}) =  
\mathrm{ind}_q(\widetilde{\mathcal{G}}|_{\widetilde{h(C)}})$  for some $C\in 
\mathrm{Sep}_{0}(\mathcal{F})$, where $\{p\}=E\cap \widetilde{C}$ and $\{q\}=E\cap \widetilde{h(C)}$; 

\item[(iii)] $ \mathrm{ind}_p(\widetilde{\mathcal{F}}|_{\widetilde{C}}) =  
\mathrm{ind}_q(\widetilde{\mathcal{G}}|_{\widetilde{h(C)}})$  for all $C\in 
\mathrm{Sep}_{0}(\mathcal{F})$, where $\{p\}=E\cap \widetilde{C}$ and $\{q\}=E\cap \widetilde{h(C)}$; 

\item[(iv)] $\sum\limits_{C\in \Sigma}\mathrm{ind}_{p_C}(\widetilde{\mathcal{F}}|_{\widetilde{C}})=\sum\limits_{C\in \Sigma}  \mathrm{ind}_{q_C}(\widetilde{\mathcal{G}}|_{\widetilde{h(C)}})$ for any non-empty finite
subset $\Sigma\subset Sep_0(\mathcal{F})$, where $\{p_C\}=E\cap \widetilde{C}
$ and $\{q_C\}=E\cap \widetilde{h(C)}$. 
\end{description}
\end{theorem_D}

\section{Preliminaries}

Here, the vector fields are assumed to have isolated singularities.

We say that a complex analytic set $V=f^{-1}(0)$, where $f\colon(\mathbb{C}^{n},0)\rightarrow(\mathbb{C},0)$ is a complex analytic function, is \emph{invariant by a vector field} $X$ if for all $p\in V$, we have $df(p)\cdot
X(p)=0$.

\begin{proposition}[\protect\cite{CamachoLS:1984}, Proposition 2]
\label{thm_Remmert-Stein} Let $\psi\colon(\mathbb{C}^{n},0)\rightarrow (\mathbb{C}^{n},0)$ be a topological equivalence between two holomorphic
foliations $\mathcal{F}^{X}$ and $\mathcal{F}^{Y}$ generated by $X$ and $Y$,
respectively. If $V$ is an irreducible complex analytic curve invariant by $X
$ then $W=\psi(V)$ is an irreducible complex analytic curve invariant by $Y$.
\end{proposition}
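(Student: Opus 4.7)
The plan is to reduce to a small representative and apply the Remmert-Stein extension theorem, exactly as suggested by the internal label of the proposition. First I would fix a sufficiently small representative $U$ of the germ on which $V\cap U$ has $0$ as its only singular point, $X$ has $0$ as its only zero on $V\cap U$, and $\psi$ is a homeomorphism of $U$ onto an open neighborhood $U'$ of $0$. Since $V$ is irreducible, its smooth locus $V\setminus\{0\}$ is connected (this follows, e.g., from a Puiseux parametrization of $V$, which exhibits $V\setminus\{0\}$ as a punctured disc). Because $V$ is invariant by $X$ and $X$ is nowhere zero on $V\setminus\{0\}$, this smooth connected curve is a single leaf $L$ of $\mathcal{F}^{X}$.

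Next I would use that $\psi$ maps integral curves to integral curves: then $\psi(L)$ is a leaf $L'$ of $\mathcal{F}^{Y}$, in particular a smooth, connected, complex analytic curve inside $U'\setminus\{0\}$. Since $\psi(0)=0$, we have $W\setminus\{0\}=L'$. As $V$ is closed in $U$ and $\psi$ is a homeomorphism, $W$ is closed in $U'$; consequently $L'=W\setminus\{0\}$ is a closed smooth analytic subset of $U'\setminus\{0\}$ of pure dimension one.

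Now I would invoke the Remmert-Stein theorem: $W$ is closed in $U'$, $W\setminus\{0\}$ is analytic of pure dimension $1$ there, and the excluded set $\{0\}$ has complex dimension $0<1$; therefore $W$ is analytic in $U'$. Irreducibility of the germ $(W,0)$ follows because $W\setminus\{0\}$ is connected (the continuous image of a connected set), and this property characterizes irreducibility for an analytic curve germ. Invariance of $W$ under $Y$ is already satisfied on $W\setminus\{0\}$, since that set is a single $Y$-orbit; writing $W=\{g=0\}$ locally with $g$ a reduced equation, the identity $dg\cdot Y\equiv 0$ holds on $W\setminus\{0\}$ and extends to $0$ either by continuity or from $Y(0)=0$. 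The main technical point to keep in mind is ensuring that $L'$ is closed in $U'\setminus\{0\}$ rather than merely immersed; this is precisely where the closedness of $W=\psi(V)$, inherited from that of $V$ through the homeomorphism $\psi$, is essential for Remmert-Stein to apply.
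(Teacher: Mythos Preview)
Your proof is correct and follows exactly the Remmert--Stein approach that the proposition's internal label already advertises. The paper itself does not give a proof of this statement: it is simply quoted, with attribution, as Proposition~2 of Camacho--Lins~Neto--Sad \cite{CamachoLS:1984}, so there is no in-paper argument to compare against; your reconstruction is essentially the classical argument from that reference.
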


\subsection{Index along an invariant curve}

\label{subsec: Index along} Let $p$ be a singularity of $X$ and $V\ni p$ be
an invariant curve of $X$. Recall that the \emph{index of $X$ along $V$ at $p
$}, denoted by $\mathrm{ind}_{p}(X|_{V})$, is the topological index of $X|_{V\cap B}$ at $p$, considered as a real vector field on $V\cap B$, where $B\subset \mathbb{C}^{2}$ is a ball centered at $p$ and small enough such
that $V\cap B$ is homeomorphic to a real $2$-dimensional ball (for instance,
such homeomorphism could be realized by a Puiseux's parametrization of $V\cap B$).

Let $Y$ be a germ of holomorphic vector field at $(\mathbb{C}^{n},0)$
admitting an irreducible separatrix $W$. It follows from \cite[Proposition 3]{CamachoLS:1984} that there exists a unique holomorphic function $g$ on a
disc $D\subset\mathbb{C}$ such that $\mathrm{ord}_{0}g=\mathrm{ind}_0(Y|_{W})$
and $Y\circ\beta(t)=g(t)\beta^{\prime}(t)$ for all small enough $t\in\mathbb{C}$, where $\beta\colon D\subset\mathbb{C}\rightarrow W$ is the Puiseux's
parameterization of $W$. In particular, $\mathrm{ord}_{0}(Y\circ\beta)=\mathrm{ord}_{0}(g\beta^{\prime})$. We choose linear coordinates $(z_1,....,z_n)$ such that $\beta(t)=(t^{k},\psi (t))$, where $k=m(W,0)$ and $\mathrm{ord}_{0}\psi>k$. Thus, 
\begin{equation*}
\nu(Y)\cdot m(W,0)\leq\mathrm{ord}_{0}(Y\circ\beta )=\mathrm{ord}_{0}g+\mathrm{ord}_{0}\beta^{\prime}. 
\end{equation*}
Therefore, 
\begin{equation}
\nu(Y)\cdot m(W,0)\leq\mathrm{ind}_0(Y|_{W})+m(W,0)-1.   \label{ineq_1}
\end{equation}

\begin{remark}
\textnormal{\ We also denote $\mathrm{ind}_{p}(\mathcal{F}|_{V})$, referring
to the index of $X$ along $V$ at point $p$, where $\mathcal{F}$ is the
foliation determined by $X$.}
\end{remark}

\section[Some sufficient conditions to the inv. of the alg. multiplicity]{Some sufficient conditions to the invariance of the algebraic multiplicity}

\label{sec:sufficient_cond}

In this section, we present several remarks and results on sufficient
conditions for the invariance of the algebraic multiplicity.

\begin{definition}\label{definition: r_V}
Let $\mathcal{F}$ be a germ of holomorphic foliation by curves at $0\in\mathbb{C}^{n}$ and $\mathrm{Sep}_{0}(\mathcal{F})$ be the collection of all
irreducible separatrices of $\mathcal{F}$ at $0$. For each $V\in\mathrm{Sep}_{0}(\mathcal{F})$, we set $r_{V}(\mathcal{F})=\frac{\mathrm{ind}_0(\mathcal{F}|_{V})-1}{m(V,0)}$. If $\mathrm{Sep}_{0}(\mathcal{F})\not =\emptyset$, we
set 
\begin{equation*}
r(\mathcal{F})=\inf_{V\in\mathrm{Sep}_{0}(\mathcal{F})}r_{V}(\mathcal{F}) 
\end{equation*}
and if $\mathrm{Sep}_{0}(\mathcal{F})=\emptyset$, we set $r(\mathcal{F})=+\infty$.
\end{definition}

Let $U_{i}$ be an open neighbourhood of $0\in\mathbb{C}^{n}$ and let $X_{i}$
be a holomorphic vector field with isolated singularities in $U_{i}$
inducing the holomorphic foliation by curves $\mathcal{F}_{i}:=\mathcal{F}^{X_{i}}$, $i=1,2$.

\begin{proposition}
\label{invariace_ratio} Let $\phi\colon U_{1}\rightarrow U_{2}$ be a
topological equivalence between $\mathcal{F}_{1}$ and $\mathcal{F}_{2}$.
Assume that $n=2$ or $\phi$ is a bi-Lipschitz homeomorphism. Then $r(\mathcal{F}_{1})=r(\mathcal{F}_{2})$.
\end{proposition}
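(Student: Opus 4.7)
The plan is to prove the stronger pointwise statement that for every $V \in \mathrm{Sep}_{0}(\mathcal{F}_{1})$, setting $W := \phi(V)$, one has $r_V(\mathcal{F}_1) = r_W(\mathcal{F}_2)$; taking the infimum will then yield the proposition. First, I would use Proposition \ref{thm_Remmert-Stein}, applied to both $\phi$ and $\phi^{-1}$, to produce a bijection $\mathrm{Sep}_{0}(\mathcal{F}_{1}) \to \mathrm{Sep}_{0}(\mathcal{F}_{2})$, $V \mapsto \phi(V)$. In the degenerate case $\mathrm{Sep}_{0}(\mathcal{F}_1) = \emptyset$, the bijection forces $\mathrm{Sep}_{0}(\mathcal{F}_2) = \emptyset$ as well, so $r(\mathcal{F}_1) = r(\mathcal{F}_2) = +\infty$ and the proposition is immediate; henceforth I assume both separatrix sets are nonempty and fix $V \in \mathrm{Sep}_{0}(\mathcal{F}_1)$ and $W = \phi(V)$.

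Next, I would show that the numerator of $r_V$ is invariant under $\phi$. I would exploit the parametric description recalled in Subsection \ref{subsec: Index along}: if $\beta$ is a Puiseux parameterization of $V$ and $X_1 \circ \beta = g_1\,\beta'$, then $\mathrm{ind}_0(\mathcal{F}_1|_V) = \mathrm{ord}_0 g_1$, which identifies the index with the Poincaré-Hopf index of the pulled-back planar vector field. Since $\phi$ is a topological equivalence of ambient foliations, it induces a homeomorphism of pointed pairs $(V,0) \to (W,0)$ that is compatible with the holonomy transport of $\mathcal{F}_1$ around $V$ and of $\mathcal{F}_2$ around $W$. The topological invariance of this (Gomez-Mont-type) index along an invariant analytic curve --- a fact already present in the circle of ideas of \cite{CamachoLS:1984} --- then yields $\mathrm{ind}_0(\mathcal{F}_1|_V) = \mathrm{ind}_0(\mathcal{F}_2|_W)$.

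For the denominator, the case distinction in the hypothesis enters. If $n = 2$, then $\phi$ restricts to a germ of homeomorphism of pairs $(\mathbb{C}^2, V, 0) \to (\mathbb{C}^2, W, 0)$ of plane analytic curves, so the plane-curve case of Zariski's multiplicity conjecture (which is classical) gives $m(V,0) = m(W,0)$. If instead $\phi$ is bi-Lipschitz in arbitrary dimension $n$, its restriction to the pair is a bi-Lipschitz homeomorphism of germs of complex analytic curves, and the bi-Lipschitz invariance of multiplicity of complex analytic sets (generalizing the plane-curve result) provides the same equality.

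Combining the previous two ingredients gives $r_V(\mathcal{F}_1) = r_W(\mathcal{F}_2)$ for every separatrix $V$, and passing to infima via the bijection produced in the first step finishes the proof. The principal obstacle is the index comparison: unlike $\phi$ itself, its restriction $\phi|_V$ does not a priori conjugate the real flows of $X_1|_V$ and $X_2|_W$ (the complex leaf $V\setminus\{0\}$ is mapped to $W\setminus\{0\}$ as a set, but real orbits within these leaves need not correspond), so the equality of Poincaré-Hopf indices cannot be deduced from a direct orbit-correspondence argument. One must instead invoke, or extract from the Puiseux-parametric framework, the topological invariance of the Gomez-Mont-type index of the pair (ambient foliation, invariant analytic curve), which is the subtle analytic content underlying the proposition.
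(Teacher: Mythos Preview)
Your proposal is correct and follows essentially the same route as the paper: both arguments establish the pointwise equality $r_V(\mathcal{F}_1)=r_{\phi(V)}(\mathcal{F}_2)$ by invoking the topological invariance of the index along a separatrix (Theorem~B of \cite{CamachoLS:1984}) for the numerator, and the topological (resp.\ bi-Lipschitz) invariance of the multiplicity of plane (resp.\ arbitrary) complex curves for the denominator. Your additional remarks---the explicit bijection via Proposition~\ref{thm_Remmert-Stein}, the treatment of the empty case, and the discussion of why the index equality is not an immediate orbit-correspondence---are welcome clarifications but do not change the underlying strategy.
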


\begin{proof}
Indeed, let $V_{1}$ be a separatrix of $\mathcal{F}_{1}$, and $V_{2}
=\phi(V_{1})$. It follows from Theorem B in \cite{CamachoLS:1984}, that $\mathrm{ind}_0(\mathcal{F}_{1} |_{V_{1}}) = \mathrm{ind}_0(\mathcal{F}_{2}
|_{V_{2}})$, for any dimension $n$. If $n=2$ it follows from \cite{Zariski:1932} (or \cite{Burau:1932}) that $m(V_{1},0) = m(V_{2},0)$. It is
important to note that the bi-Lipschitz invariance of the multiplicity of
curves follows from the bi-Lipschitz classification of curves, which was
completed by Pichon and Neumann \cite{N-P}, with previous contributions by
Pham and Teissier \cite{P-T} (see also a published and translated to English
version in \cite{P-T:2020}) and Fernandes \cite{Fernandes:2003} (see also
Theorem 6.1 in the work of Fernandes and Sampaio \cite{bookSing23} for a
direct proof). Hence, if $\phi$ is a bi-Lipschitz homeomorphism, then $m(V_{1},0) = m(V_{2},0)$, also for $n>2$. In both cases, we have $r_{V_{1}}(\mathcal{F}_{1}) = \frac {\mathrm{ind}_0(\mathcal{F}_{1}|_{V_{1}})-1}{m(V_{1},0)}= \frac {\mathrm{ind}_0(\mathcal{F}_{2}|_{V_{2}})-1}{m(V_{2},0)} =
r_{V_{2}}(\mathcal{F}_{2})$.
\end{proof}

In the sequel, we prove that $r(\mathcal{F})$ gives a measure of how far is $\mathcal{F}$ to have algebraic multiplicity equal to one.

\begin{proposition}
If $r(\mathcal{F}^{X})=0$ then $\nu(\mathcal{F}^{X})=1$. Conversely, if $\nu(\mathcal{F}^{X})=1$ and $\mathrm{Spect}(DX(0))\not =\{0\}$ then $r(\mathcal{F}^{X})=0$.
\end{proposition}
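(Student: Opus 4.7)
The forward implication is essentially a rearrangement of (\ref{ineq_1}). For every $V\in\mathrm{Sep}_0(\mathcal{F}^X)$, dividing (\ref{ineq_1}) by $m(V,0)>0$ yields
$$\nu(X)-1\;\le\;\frac{\mathrm{ind}_0(X|_V)-1}{m(V,0)}\;=\;r_V(\mathcal{F}^X).$$
Taking the infimum over all separatrices, one obtains $\nu(X)-1\le r(\mathcal{F}^X)$. So if $r(\mathcal{F}^X)=0$ (which already forces $\mathrm{Sep}_0(\mathcal{F}^X)\neq\emptyset$) then $\nu(X)\le 1$, and combined with the trivial lower bound $\nu(X)\ge 1$ coming from the isolated singularity $X(0)=0$, we conclude $\nu(X)=1$.

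For the converse, the plan is to use the assumption $\mathrm{Spect}(DX(0))\neq\{0\}$ to exhibit a single smooth analytic separatrix $V$ with $r_V(\mathcal{F}^X)=0$. Pick a non-zero eigenvalue $\lambda\in\mathrm{Spect}(DX(0))$ with eigenvector $v\in\mathbb{C}^n$; after a linear change of coordinates, assume $v=e_1$. In dimension $n=2$, the existence of an analytic invariant curve tangent to $v$ is a classical consequence of the Briot--Bouquet theorem. In higher dimensions, the invariance equations for a graph $z_i=\phi_i(z_1)$, $i\ge 2$, take the form of a Briot--Bouquet system $z_1\phi_i'(z_1)=F_i(z_1,\phi)$ with $F_i(0,0)=0$; its formal solution converges to a holomorphic one — either directly or by invoking the one-dimensional holomorphic strong manifold attached to $\lambda$. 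Once $V$ is produced and parametrized by $\beta(t)=tv+O(t^2)$, we compute
$$X\circ\beta(t)\;=\;DX(0)(tv)+O(t^2)\;=\;\lambda t\,v+O(t^2)\;=\;\bigl(\lambda t+O(t^2)\bigr)\,\beta'(t),$$
so the holomorphic function $g$ of Subsection~\ref{subsec: Index along} has $\mathrm{ord}_0 g=1$. Hence $\mathrm{ind}_0(X|_V)=1$ and $m(V,0)=1$, giving $r_V(\mathcal{F}^X)=0$, and therefore $r(\mathcal{F}^X)=0$ since all $r_W$ are non-negative.

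Both the forward implication and the concluding one-line index calculation in the converse are essentially immediate from the preliminaries. The main obstacle is the middle step of the converse: producing an \emph{analytic} (not merely formal) smooth separatrix tangent to the prescribed eigenvector $v$ in arbitrary dimension. In dimension two this is standard; in dimension $n\ge 3$, possible resonances among the eigenvalues of $DX(0)$ may obstruct naive convergence of the Briot--Bouquet series, so one must either select $\lambda$ to avoid positive-integer resonances with the remaining eigenvalues or appeal to an appropriate holomorphic strong-manifold theorem attached to $\lambda$.
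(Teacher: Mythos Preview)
Your forward implication and the paper's coincide. For the converse, both you and the paper pursue the same strategy: exhibit a separatrix $V$ with $\mathrm{ind}_0(X|_V)=1$, whence $r_V(\mathcal{F}^X)=0$ and hence $r(\mathcal{F}^X)=0$ (using the forward-direction bound $r\ge\nu-1=0$).

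The paper's proof of this proposition merely asserts the existence of such a $V$, deferring the justification to Claim~\ref{exist_ind_one} in the proof of Theorem~C. There, the gap you yourself flag is closed by choosing the eigenvalue $\mu$ of \emph{maximal modulus}: since $|\lambda_i|\le|\mu|$ for all $i$, one has $|q\mu-\lambda_i|\ge (q-1)|\mu|\ge\tfrac{|\mu|}{2}\,q$ for every integer $q\ge 2$, which is precisely the small-divisor hypothesis of the high-dimensional Briot--Bouquet theorem of Carrillo--Sanz~\cite[Theorem~3.3]{CarrilloS:2014}. This guarantees a convergent non-singular invariant curve tangent to the eigenspace of $\mu$, after which your one-line index computation applies verbatim. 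So your proposal is correct in plan and essentially matches the paper; the one missing ingredient is this maximal-modulus selection together with the Carrillo--Sanz reference, which resolves exactly the resonance issue you left open.
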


\begin{proof}
Let $\mathcal{F}:=\mathcal{F}^{X}$ and $V\in\mathrm{Sep}_{0}(\mathcal{F})$,
then it follows from (\ref{ineq_1}) that
\begin{equation*}
\nu(\mathcal{F},0)\leq\frac{\mathrm{ind}_0(X|_{V})+m(V,0)-1}{m(V,0)}=1+r_{V}(\mathcal{F}). 
\end{equation*}
If we suppose that $r(\mathcal{F})=0$ then $\nu(\mathcal{F},0)\leq1$, which
implies that $\nu(\mathcal{F},0)=1$. Now, suppose that $\mathrm{Spect}(DX(0))\not =\{0\}$. Then it follows that there exists $V\in\mathrm{Sep}_{0}(\mathcal{F})$ such that $\mathrm{ind}_0(X|_{V})=1$. Therefore, $r_{V}(\mathcal{F})=0$; which implies that $r(\mathcal{F})\leq0$. Since $0=\nu(\mathcal{F},0)-1\leq r(\mathcal{F})$, we conclude that $r(\mathcal{F})=0$.
\end{proof}

\vspace{0.5 cm}

\begin{proposition}
\label{prop:formula_mult} Let $\mathcal{F}:=\mathcal{F}^{X}$ and $V\in\mathrm{Sep}_{0}(\mathcal{F})$. Then $\mathrm{\mathrm{in}}(X)|_{C(V,0)}\not
\equiv 0$ if and only if $\nu(\mathcal{F},0)=1+r_V(\mathcal{F}).$
\end{proposition}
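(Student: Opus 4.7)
The plan is to reduce the biconditional to comparing two different expressions for $\mathrm{ord}_0(X\circ\beta)$, where $\beta$ is a Puiseux parameterization of $V$. On one hand, the computation leading to inequality (\ref{ineq_1}) in the preliminaries yields
\[
\mathrm{ord}_0(X\circ\beta) = \mathrm{ind}_0(X|_V) + m(V,0) - 1,
\]
since $X\circ\beta(t) = g(t)\beta'(t)$ with $\mathrm{ord}_0 g = \mathrm{ind}_0(X|_V)$ and $\mathrm{ord}_0\beta' = m(V,0) - 1$. Setting $m := \nu(\mathcal{F},0)$ and $k := m(V,0)$, the target equality $\nu(\mathcal{F},0) = 1 + r_V(\mathcal{F})$ rewrites as $mk = \mathrm{ind}_0(X|_V) + k - 1$, which is precisely $\mathrm{ord}_0(X\circ\beta) = mk$.

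On the other hand, I would choose coordinates $(z_1,\ldots,z_n)$ as in the preliminaries so that $\beta(t)=(t^k,\psi(t))$ with $\mathrm{ord}_0\psi > k$; then $C(V,0)$ is the $z_1$-axis. Writing $X = X_m + X_{m+1} + \cdots$ with $X_m = \mathrm{in}(X)$ and each $X_j = \sum_i (f_i)_j \frac{\partial}{\partial z_i}$, the core step is a monomial estimate: any monomial $z_1^{\alpha_1}\cdots z_n^{\alpha_n}$ of total degree $j$ evaluates on $\beta(t)$ to something of $t$-order $k\alpha_1 + \sum_{l\geq 2}\alpha_l\cdot\mathrm{ord}_0\psi_l \geq (k+1)j - \alpha_1$, with minimum $jk$ attained exactly when $\alpha_1 = j$. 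Hence $\mathrm{ord}_0((f_i)_j\circ\beta) \geq jk$, equality at $j = m$ occurs iff the $z_1^m$-coefficient of $(f_i)_m$ is nonzero, and for $j>m$ one automatically has $jk > mk$. Consequently $\mathrm{ord}_0(X\circ\beta) = mk$ iff some component of $\mathrm{in}(X)$ has a nonzero $z_1^m$-coefficient; and the latter is exactly the condition $\mathrm{in}(X)|_{C(V,0)} \not\equiv 0$, since on the $z_1$-axis only pure $z_1^m$ monomials survive.

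Combining the two computations yields the desired equivalence in both directions. The only delicate point is that no cancellation can make $\mathrm{ord}_0(X\circ\beta)$ strictly exceed $mk$ when $\mathrm{in}(X)|_{C(V,0)}\not\equiv 0$: the $t^{mk}$-coefficient of $(f_i)_m(\beta(t))$ is exactly the $z_1^m$-coefficient $c_{i,(m,0,\ldots,0)}$ of $(f_i)_m$, with no possible contribution from the remaining monomials in $(f_i)_m$ (all of which give strictly larger $t$-order under $\mathrm{ord}_0\psi>k$) nor from the higher homogeneous components $X_j$ with $j>m$. I expect no substantive obstacle beyond this bookkeeping.
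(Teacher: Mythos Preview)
Your proposal is correct and follows essentially the same approach as the paper: both reduce the equivalence to the claim that $\mathrm{in}(X)|_{C(V,0)}\not\equiv 0$ if and only if $\mathrm{ord}_0(X\circ\beta)=\nu(\mathcal{F},0)\cdot m(V,0)$, and then combine this with the identity $\mathrm{ord}_0(X\circ\beta)=\mathrm{ind}_0(X|_V)+m(V,0)-1$ from the preliminaries. The paper simply asserts that key equivalence in one sentence, whereas you unpack it via the explicit monomial estimate $\mathrm{ord}_0\bigl(z^\alpha\circ\beta\bigr)\geq (k+1)j-\alpha_1\geq jk$ with equality iff $\alpha_1=j$; this added bookkeeping is sound and in particular your no-cancellation observation is exactly what justifies the paper's ``notice that'' step.
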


\begin{proof}
As before, after a linear change of coordinates, we may suppose that there
is a holomorphic function $f$ such that $X\circ\alpha=f\cdot\alpha^{\prime}$, where $\alpha\colon D\subset\mathbb{C}\rightarrow V$ is the Puiseux
parametrization of $V$ given by $\alpha(t)=(t^{k},\phi(t))$, where $k=m(V,0)$
and $\mathrm{ord}_{0}\phi>k$. Thus, $\mathrm{ord}_{0}(X\circ\alpha)=\mathrm{ord}_{0}(f\cdot\alpha^{\prime})=\mathrm{ind}_0(X|_{V})+m(V,0)-1$. But notice
that $\mathrm{\mathrm{in}}(X)|_{C(V,0)}\not \equiv 0$ if and only if $\mathrm{ord}_{0}(X\circ\alpha)=\mathrm{ord}_{0}(X)\cdot \mathrm{ord}_{0}(\alpha)=\nu(\mathcal{F},0)\cdot m(V,0)$. In particular, if $\mathrm{\mathrm{in}}(X)|_{C(V,0)}\not \equiv 0$ then
\begin{equation*}
\nu(\mathcal{F},0)\cdot m(V,0)=\mathrm{ord}_{0}(X\circ \alpha)=\mathrm{ind}_0(X|_{V})+m(V,0)-1. 
\end{equation*}
Thus, $\nu(\mathcal{F},0)=1+r_V(\mathcal{F})$. Conversely, suppose that $\nu(\mathcal{F},0)=1+r_{V}(\mathcal{F})$, then $\mathrm{ord}_{0}(X\circ\alpha)=\nu(\mathcal{F},0)\cdot m(V,0)=\mathrm{ord}_{0}(X)\cdot\operatorname{ord}_{0}(\alpha)$.
We conclude that $\mathrm{\mathrm{in}}(X)|_{C(V,0)}\not \equiv 0$.
\end{proof}

\begin{corollary}
\label{cor:formula_mult} Let $\phi\colon U_{1}\rightarrow U_{2}$ be a
topological equivalence between $\mathcal{F}_{1}:=\mathcal{F}^{X_{1}}$ and $\mathcal{F}_{2}:=\mathcal{F}^{X_{2}}$. Assume that $n=2$ or $\phi$ is a
bi-Lipschitz homeomorphism. If there are $V_{i}\in\mathrm{Sep}_{0}(\mathcal{F}_{i})$ such that $\mathrm{in}(X_{i})|_{C(V_{i},0)}\not \equiv 0$, $i=1,2$,
then $\mathrm{in}(X_{2})|_{C(\phi (V_{1}),0)}\not \equiv 0$ and $\nu(\mathcal{F}_{1})=\nu(\mathcal{F}_{2})$.
\end{corollary}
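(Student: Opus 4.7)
The plan is to combine Proposition~\ref{prop:formula_mult} with the per-separatrix invariance already implicit in the proof of Proposition~\ref{invariace_ratio}. Indeed, that proof shows not merely that the infima $r(\mathcal{F}_1)$ and $r(\mathcal{F}_2)$ coincide, but that for \emph{each} individual separatrix $V$ of $\mathcal{F}_1$ one has $r_V(\mathcal{F}_1) = r_{\phi(V)}(\mathcal{F}_2)$: the index along $V$ is a topological invariant by \cite{CamachoLS:1984}, while $m(V,0)$ is preserved in dimension two by the Zariski--Burau theorem and in arbitrary dimension under bi-Lipschitz equivalences by Pham--Teissier and Neumann--Pichon. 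Note that by Proposition~\ref{thm_Remmert-Stein} the images $\phi(V_1)$ and $\phi^{-1}(V_2)$ are again irreducible separatrices of $\mathcal{F}_2$ and $\mathcal{F}_1$ respectively, so all the ratios $r_\bullet(\cdot)$ in what follows are well-defined.

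With this per-separatrix identity in hand, the equality of multiplicities follows from a two-sided sandwich based on inequality (\ref{ineq_1}). By hypothesis, Proposition~\ref{prop:formula_mult} gives
\begin{equation*}
\nu(\mathcal{F}_1,0) = 1 + r_{V_1}(\mathcal{F}_1) \quad \text{and} \quad \nu(\mathcal{F}_2,0) = 1 + r_{V_2}(\mathcal{F}_2).
\end{equation*}
Applying the general inequality (\ref{ineq_1}) to the separatrix $\phi(V_1)$ of $\mathcal{F}_2$ yields
\begin{equation*}
\nu(\mathcal{F}_2,0) \leq 1 + r_{\phi(V_1)}(\mathcal{F}_2) = 1 + r_{V_1}(\mathcal{F}_1) = \nu(\mathcal{F}_1,0),
\end{equation*}
and the symmetric application of (\ref{ineq_1}) to $\phi^{-1}(V_2)$ as a separatrix of $\mathcal{F}_1$ produces the reverse inequality. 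Hence $\nu(\mathcal{F}_1,0)=\nu(\mathcal{F}_2,0)$.

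Finally, collapsing the chain above forces the equality $\nu(\mathcal{F}_2,0) = 1 + r_{\phi(V_1)}(\mathcal{F}_2)$, so the converse direction of Proposition~\ref{prop:formula_mult}, applied to the separatrix $\phi(V_1)$ of $\mathcal{F}_2$, delivers $\mathrm{in}(X_2)|_{C(\phi(V_1),0)} \not\equiv 0$. I do not foresee any real obstacle here: the proof is essentially bookkeeping. The only point worth flagging is that Proposition~\ref{invariace_ratio} must be read at the level of individual separatrices (rather than just the infimum), which is precisely what its proof already provides, and that both $\phi$ and $\phi^{-1}$ are allowed as topological/bi-Lipschitz equivalences in order to close the sandwich.
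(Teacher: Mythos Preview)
Your proof is correct and follows essentially the same route as the paper: both combine Proposition~\ref{prop:formula_mult} with the per-separatrix invariance established in the proof of Proposition~\ref{invariace_ratio}, together with inequality~(\ref{ineq_1}). The paper's own argument is terser---it passes through the infimum $r(\mathcal{F}_i)$ and invokes Proposition~\ref{invariace_ratio} as stated, silently omitting the verification of $\mathrm{in}(X_2)|_{C(\phi(V_1),0)}\not\equiv 0$---whereas your sandwich argument makes each step explicit and supplies that missing clause via the converse direction of Proposition~\ref{prop:formula_mult}.
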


\begin{proof}
It follows from Proposition \ref{prop:formula_mult} that $\nu(\mathcal{F}_{i})=1+r(\mathcal{F}_{i})$, $i=1,2$. By Proposition \ref{invariace_ratio}, $r(\mathcal{F}_{1})=r(\mathcal{F}_{2})$. Therefore, $\nu(\mathcal{F}_{1})=\nu(\mathcal{F}_{2})$.
\end{proof}

As another consequence of Proposition \ref{prop:formula_mult}, we obtain the
following well-known result.

\begin{corollary}
\label{invariance_mult_dicritial} Assume that $n=2$ and $\mathcal{F}_{1}$ is
dicritical. If $\mathcal{F}_{1}$ and $\mathcal{F}_{2}$ are topologically
equivalent then $\nu(\mathcal{F}_{1})=\nu(\mathcal{F}_{2})$.
\end{corollary}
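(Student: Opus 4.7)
The strategy is to reduce the claim to Corollary \ref{cor:formula_mult} by producing, for each $i=1,2$, a separatrix $V_i \in \mathrm{Sep}_0(\mathcal{F}_i)$ with $\mathrm{in}(X_i)|_{C(V_i,0)} \not\equiv 0$. The two ingredients I would combine are: in dimension two, a dicritical foliation has separatrices sweeping out a cofinite set of tangent directions; and the initial part of a vector field vanishes identically on only finitely many lines through the origin.

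For $V_1$, let $\pi\colon \widehat{\mathbb{C}^2}\to \mathbb{C}^2$ be the quadratic blow-up at $0$ with exceptional divisor $E\cong \mathbb{P}^1$. Since $\mathcal{F}_1$ is dicritical, $E$ is not invariant for $\pi^{\ast}\mathcal{F}_1$, so outside the finite singular locus of $\pi^{\ast}\mathcal{F}_1$ on $E$, every leaf crosses $E$ transversely and descends via $\pi$ to a separatrix of $\mathcal{F}_1$ whose tangent direction at $0$ is the corresponding point of $E$. Hence $\{C(V,0) : V \in \mathrm{Sep}_0(\mathcal{F}_1)\}$ covers $\mathbb{P}^1$ minus a finite set. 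On the other hand, writing $\mathrm{in}(X_1) = A\,\partial_{z_1} + B\,\partial_{z_2}$ with $A,B$ homogeneous of degree $\nu(\mathcal{F}_1,0)$, a line $L = \{(ta,tb) : t \in \mathbb{C}\}$ satisfies $\mathrm{in}(X_1)|_L \equiv 0$ iff $A(a,b)=B(a,b)=0$, i.e., iff $L$ is a common linear factor of $A$ and $B$; this is a finite set of lines. Intersecting the cofinite set with the complement of this finite ``bad'' set yields the desired $V_1$.

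To produce $V_2$, I would first show that $\mathcal{F}_2$ is also dicritical. Proposition \ref{thm_Remmert-Stein}, applied to both $\phi$ and $\phi^{-1}$, gives a bijection between $\mathrm{Sep}_0(\mathcal{F}_1)$ and $\mathrm{Sep}_0(\mathcal{F}_2)$, so $\mathrm{Sep}_0(\mathcal{F}_2)$ is infinite. Since a nondicritical foliation at $(\mathbb{C}^2,0)$ has only finitely many separatrices (classical after Seidenberg's reduction: every separatrix passes through one of the finitely many reduced singular points on the divisor, each carrying at most two local separatrices), $\mathcal{F}_2$ must be dicritical as well. Repeating the argument of the previous paragraph on $\mathcal{F}_2$ then produces $V_2 \in \mathrm{Sep}_0(\mathcal{F}_2)$ with $\mathrm{in}(X_2)|_{C(V_2,0)}\not\equiv 0$, and Corollary \ref{cor:formula_mult} yields $\nu(\mathcal{F}_1) = \nu(\mathcal{F}_2)$. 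The main technical point is the auxiliary fact ``nondicritical implies finitely many separatrices'' needed to transfer dicriticity through $\phi$; once this is granted, the remainder of the argument is a direct genericity consideration.
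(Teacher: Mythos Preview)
Your approach coincides with the paper's: exhibit, for each $i$, a separatrix $V_i$ with $\mathrm{in}(X_i)|_{C(V_i,0)}\not\equiv 0$ and then apply Corollary~\ref{cor:formula_mult}. The paper's proof is equally terse on both points (``it is clear that $\mathcal{F}_2$ is dicritical as well'' and ``a generic separatrix $V_i$ \dots\ satisfies $\mathrm{in}(X_i)|_{C(V_i,0)}\not\equiv 0$''); your genericity argument for producing $V_1$ is a correct unpacking of the latter.

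There is, however, a genuine gap in your justification that $\mathcal{F}_2$ is dicritical. In this paper ``dicritical'' refers only to the \emph{first} blow-up (the divisor $E=\pi^{-1}(0)$ is not $\pi^{\ast}\mathcal{F}$-invariant), and a foliation can be nondicritical in this sense yet have infinitely many separatrices: take $X=x\,\partial_x+2y\,\partial_y$, whose separatrices $\{y=cx^2\}_{c\in\mathbb{C}}$ are all tangent to $\{y=0\}$, while a direct computation shows $E$ is invariant after one blow-up. So ``$\mathrm{Sep}_0(\mathcal{F}_2)$ infinite'' does not give dicriticity in the sense your next step needs (separatrices covering cofinitely many tangent directions). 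The fix is to track tangent directions rather than cardinalities: choose smooth separatrices $C,C'$ of $\mathcal{F}_1$ with distinct tangent lines, so that their intersection multiplicity at $0$ equals $1$; since the intersection multiplicity of plane-curve branches is an embedded topological invariant, $\phi(C)$ and $\phi(C')$ also meet with multiplicity $1$ and hence have distinct tangents. As $\mathcal{F}_1$ provides separatrices with infinitely many distinct tangent directions, so does $\mathcal{F}_2$; but were $\mathcal{F}_2$ nondicritical, every separatrix would be tangent to one of the finitely many directions corresponding to $\mathrm{Sing}(\pi^{\ast}\mathcal{F}_2)\cap E$. This contradiction yields first-blow-up dicriticity of $\mathcal{F}_2$, after which your argument goes through unchanged and matches the paper's.
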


\begin{proof}
Since $\mathcal{F}_{1}$ and $\mathcal{F}_{2}$ are topologically equivalent,
it is clear that $\mathcal{F}_{2}$ is dicritical as well. Thus, a generic
separatrix $V_{i}$ of $\mathcal{F}_{i}$ is smooth, i.e., $m(V_{i},0)=1$ and
satisfies $\mathrm{in}(X_{i})|_{C(V_{i},0)}\not \equiv 0$, $i=1,2$. By Corollary \ref{cor:formula_mult}, $\nu(\mathcal{F}_{1})=\nu(\mathcal{F}_{2})$.
\end{proof}

\section{Proof of Theorem A} \label{sec:proof_thm_A}
Let $\pi\colon\widehat{\mathbb{C}^{n}}\rightarrow\mathbb{C}^{n}$ be the blowing up at $0\in\mathbb{C}^{n}$. For a germ of holomorphic
foliation by curves $\mathcal{F}=\mathcal{F}^X$ at $(\mathbb{C}^n,0)$, let $\widetilde{X}:=\pi^{\ast}X$ and we denote by $\overline{\mathcal{F}}$ the \emph{strict transform} of $\mathcal{F}$, i.e., the saturated foliation in $\widehat{\mathbb{C}^{n}}$ induced by the vector filed $\widetilde{X}$. We also denote $\overline{\mathcal{F}}$ by $\pi^{\ast}\mathcal{F}$. Notice that $\overline{\mathcal{F}}$ is locally given by a
vector field $\overline{X}$ (i.e., $\overline{\mathcal{F}}=\mathcal{F}_{\overline{X}}$) obtained from $\widetilde{X}$ by the a relation of the form $\overline {X}=\frac{1}{\ell^{p}}\widetilde{X}$, where $\ell$ is a function generating the ideal of the exceptional divisor $D=\pi^{-1}(0)$ and $p$
depends on the order of the original vector field $X$ at the origin of $\mathbb{C}^n$. Defining $p_V^{(0)}:=0$,
$V^{(0)}:=V$, and $\mathcal{F}_V^{(0)}:=\mathcal{F}$, we may recursively define
$\pi_{j}$ as the blow-up at $p_V^{(j-1)}$, $E_V^{(j)}:=\pi_{j}^{-1}(p_V^{(j-1)}),$
$V^{(j)}:=\overline{\pi_{j}^{-1}(V^{(j-1)}\setminus p_V^{(j-1)})}$,  $\mathcal{F}_V^{(j)}=\pi_{j}^{\ast}(\mathcal{F}_V^{(j-1)})$ and $p_V^{(j)}$ is the point of the intersection $V^{(j)}\cap
E_V^{(j)}$. We denote
\[
\widetilde{\nu}(\mathcal{F}_V^{(j)},p_V^{(j)}):=\left\{
\begin{array}
	[c]{ccl}	\nu\left(  \mathcal{F}_V^{(j)},p_V^{(j)})\right)  -1 & \text{if } &
	\mathcal{F}_V^{(j)}\text{ is nondicritical around } p_V^{(j)},\\
	\nu\left(  \mathcal{F}_V^{(j)},p_V^{(j)})\right)   & \text{if } &
	\mathcal{F}_V^{(j)}\text{ is dicritical around } p_V^{(j)},
\end{array}
\right.
\]
where $\nu\left(  \mathcal{F}_V^{(j)},p_V^{(j)}\right)  $ is the algebraic
multiplicity of the germ of foliation $\mathcal{F}_V^{(j)}$ at $p_V^{(j)}\in V^{(j)}\cap E_V^{(j)}$. 

In particular, for $n=2$, it follows from Seindenberg's theorem
(\cite{Se68}) that there is a sequence of blowing-ups $\pi^{(n)}:=\pi
_{n}\circ\cdots\circ\pi_{1}$ that resolves both $V$ and $\mathcal{F}$ nearby
the strict transform of $\pi^{-1}(V)$. More precisely, there is a natural number $n_{V}$ such
that $V^{(n)}$ is resolved and $\mathcal{F}^{(n)}$ is a reduced foliation
nearby $p_V^{(n)}$ for all $n\geq n_{V}$.

Again, let $\pi\colon\widehat{\mathbb{C}^2}\rightarrow\mathbb{C}^{2}$ be the blowing up at $0\in\mathbb{C}^{2}$. If $S$ is a complex analytic curve in $\mathbb{C}^{2}$, we denote by $E(S)$ the closure of the set $\pi^{-1}(S\setminus \{0\})$ in $\widehat{\mathbb{C}^2}$, i.e., $E(S)=\overline{\pi^{-1}(S\setminus \{0\})}$.

In what follows, we need the following result due to Zariski \cite{Zariski:1965} (cf. \cite[p. 484]{Zariski:1971}):
\begin{theorem}\label{thm:blowups_same_emb_top}
Let $\varphi\colon (\mathbb{C}^{2},0)\to (\mathbb{C}^{2},0)$ be a homeomorphism that gives a topological equivalence between two complex analytic curves $S$ and $\widetilde{S}$. Then there is bijection $h\colon E(S)\cap \pi^{-1}(0)\to E(\widetilde{S})\cap \pi^{-1}(0)$ such that for each $p\in E(S)\cap \pi^{-1}(0)$, there is germ of homeomorphism $\phi_p\colon (\widehat{\mathbb{C}^2},p)\to (\widehat{\mathbb{C}^2},h(p))$ that sends the germ $(E(S),p)$ onto the germ $(E(\widetilde{S}),h(p))$.
\end{theorem}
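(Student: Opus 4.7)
\emph{Proof plan.} The strategy is to pass from a global topological equivalence of plane curves to a local equivalence after a single blow-up, using the classical Burau--Zariski theory of topological invariance of the Puiseux characteristic data of a plane curve singularity.

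First, I would set up a bijection of branches carrying the right numerical invariants. Since $\varphi$ is a homeomorphism of pairs, each irreducible branch $C$ of $S$ at $0$ (a connected component of $(S\setminus\{0\})\cap B_\varepsilon$ for small $\varepsilon$) is sent by $\varphi$ onto an irreducible branch of $\widetilde{S}$ at $0$, giving a bijection $\sigma$ between the branches of $S$ and those of $\widetilde{S}$. By the classical results of Burau \cite{Burau:1932} and Zariski \cite{Zariski:1932}, $\sigma$ preserves the multiplicity $m(C,0)$ of each branch and every pairwise intersection multiplicity $(C_i\cdot C_j)_0$.

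Second, I would exhibit the bijection $h$ at the level of tangent directions. Two distinct branches $C_i, C_j$ of $S$ share the same tangent line at $0$ if and only if
\[
(C_i\cdot C_j)_0 \,>\, m(C_i,0)\cdot m(C_j,0),
\]
a standard consequence of the blow-up formula for intersection numbers. Thus the ``same tangent direction'' equivalence relation on branches is preserved by $\sigma$. Since the points of $E(S)\cap\pi^{-1}(0)$ are in natural bijection with the tangent directions of the branches of $S$ (and similarly for $\widetilde{S}$), this yields the required bijection $h\colon E(S)\cap\pi^{-1}(0)\to E(\widetilde{S})\cap\pi^{-1}(0)$.

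Third, I would produce the local homeomorphism $\phi_p$. Fix $p\in E(S)\cap\pi^{-1}(0)$, let $S_p$ denote the union of the branches of $S$ tangent to $p$, and define $\widetilde{S}_{h(p)}$ analogously; by construction $\sigma$ restricts to a bijection between the branches of $S_p$ and those of $\widetilde{S}_{h(p)}$ preserving multiplicities and pairwise intersection multiplicities. Applying the first step inductively along any embedded resolution shows that $(S_p,0)$ and $(\widetilde{S}_{h(p)},0)$ share the same Puiseux characteristic pairs for each branch together with the same pairwise coincidence exponents. By the Burau--Zariski theorem, this combinatorial package is a complete invariant of the embedded topological type of a plane curve germ, so $(S_p,0)$ and $(\widetilde{S}_{h(p)},0)$ are embeddedly equivalent. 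The blow-up at $0$ transforms the Puiseux--coincidence data at $0$ into the corresponding data at each tangent direction in a purely combinatorial, canonical manner. Applying Burau--Zariski in the reverse direction, the germs $(E(S),p)=(E(S_p),p)$ and $(E(\widetilde{S}),h(p))=(E(\widetilde{S}_{h(p)}),h(p))$ are embeddedly equivalent as plane curve germs in $\widehat{\mathbb{C}^2}$, yielding the desired germ of homeomorphism $\phi_p$.

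The main obstacle is the implicit appeal to the Burau--Zariski characterization of the embedded topological type by discrete combinatorial invariants for \emph{multi-branch} curves, together with the verification that the blow-up operation on this data splits cleanly according to tangent directions. Although classical, extracting an explicit local homeomorphism $\phi_p$ from these combinatorial equivalences requires care when several branches of $S$ share a common tangent, since the various branches over $p$ must be realized simultaneously by the single ambient homeomorphism $\phi_p$.
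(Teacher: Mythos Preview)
The paper does not give its own proof of this statement: it is introduced as ``the following result due to Zariski \cite{Zariski:1965} (cf.\ \cite[p.~484]{Zariski:1971})'' and then used as a black box. So there is no in-paper argument to compare your proposal against; what you have written is essentially a reconstruction of the classical equisingularity argument that underlies Zariski's result.

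Your outline is correct and follows the standard route. The three ingredients you isolate are exactly the right ones: (i) a topological equivalence of plane curve germs induces a bijection of branches preserving multiplicities and pairwise intersection numbers (Burau--Zariski); (ii) two branches share a tangent direction iff $(C_i\cdot C_j)_0 > m(C_i,0)\,m(C_j,0)$, so the partition of branches by tangent direction, and hence the bijection $h$, is determined by these invariants; (iii) the complete embedded topological type of a plane curve germ is encoded by the Puiseux characteristic data of each branch together with the pairwise coincidence exponents, and a single blow-up acts on this package in a purely combinatorial way that respects the partition by tangent direction. Invoking Burau--Zariski in both directions then gives the local homeomorphism $\phi_p$.

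One point worth making explicit, since the paper immediately iterates this result (Corollary~\ref{cor:dicritical_blow-up_invariance} and the proof of Theorem~A): for the induction to go through one wants $\phi_p$ to also respect the germ of the exceptional divisor $\pi^{-1}(0)$ at $p$. This fits naturally into your scheme, because $\pi^{-1}(0)$ is a smooth branch through $p$ whose intersection number with each strict transform $\widetilde{C}$ equals $m(C,0)$; thus adjoining it to $E(S_p)$ simply enlarges the combinatorial data by quantities you have already shown to match, and the same Burau--Zariski argument yields a $\phi_p$ sending $(E(S)\cup\pi^{-1}(0),p)$ onto $(E(\widetilde{S})\cup\pi^{-1}(0),h(p))$.
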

 
\begin{corollary}\label{cor:dicritical_blow-up_invariance}
Let $\varphi\colon (\mathbb{C}^{2},0)\to (\mathbb{C}^{2},0)$ be a homeomorphism that gives a topological equivalence between two holomorphic foliations $\mathcal{F}$ and $\mathcal{G}$ at $(\mathbb{C}^{2},0)$. Let $V\in \operatorname{Sep}_0(\mathcal{F})$ and $\widetilde{V}=\varphi(V)$. 
Then, for each positive integer $j$, $\mathcal{F}_V^{(j)}$ is dicritical around $p_V^{(j)}$ if and only if $\mathcal{G}_{\widetilde{V}}^{(j)}$ is dicritical around $p_{\widetilde{V}}^{(j)}$.
\end{corollary}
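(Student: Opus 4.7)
\noindent The plan is to reformulate the dicriticality condition at $p_{V}^{(j)}$ as a topological property of the separatrices of $\mathcal{F}$ at $0$, and to transfer it via $\varphi$ using Zariski's Theorem~\ref{thm:blowups_same_emb_top} iteratively together with Proposition~\ref{thm_Remmert-Stein}. The classical characterization of dicritical germs at a point of $\mathbb{C}^{2}$ asserts that such a germ is dicritical iff the set of tangent directions realized by its separatrices is infinite; in the non-dicritical case the exceptional divisor of the next blow-up is invariant, so separatrices can only arise from the finitely many singularities of the pulled-back foliation on that divisor, producing only finitely many tangent directions. Applied to $\mathcal{F}_{V}^{(j)}$ at $p_{V}^{(j)}$, and using the blow-down $\pi^{(j)}:=\pi_{j}\circ\cdots\circ\pi_{1}$, the non-exceptional irreducible separatrices there biject with those $C\in\mathrm{Sep}_{0}(\mathcal{F})$ satisfying $p_{C}^{(\ell)}=p_{V}^{(\ell)}$ for $\ell=1,\ldots,j$ (call such $C$ \emph{$j$-matching with $V$}); under this bijection the tangent direction at $p_{V}^{(j)}$ corresponds to $p_{C}^{(j+1)}\in E_{V}^{(j+1)}$. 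Hence $\mathcal{F}_{V}^{(j)}$ is dicritical around $p_{V}^{(j)}$ iff the set $\{\,p_{C}^{(j+1)}:C\text{ is }j\text{-matching with }V\,\}$ is infinite, and analogously for $\mathcal{G}_{\widetilde{V}}^{(j)}$ at $p_{\widetilde{V}}^{(j)}$.

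By Proposition~\ref{thm_Remmert-Stein}, $\varphi$ induces a bijection $C\mapsto\widetilde{C}:=\varphi(C)$ between $\mathrm{Sep}_{0}(\mathcal{F})$ and $\mathrm{Sep}_{0}(\mathcal{G})$. Applying Theorem~\ref{thm:blowups_same_emb_top} to $V\cup C$ (and to $V\cup C\cup C'$), and iterating it at each common infinitely near point through the germ-level homeomorphism $\phi_{p}$ it provides, one deduces that the tree of infinitely near points is a topological invariant of the embedded curve. In particular, $C$ is $j$-matching with $V$ iff $\widetilde{C}$ is $j$-matching with $\widetilde{V}$, and $p_{C}^{(j+1)}=p_{C'}^{(j+1)}$ iff $p_{\widetilde{C}}^{(j+1)}=p_{\widetilde{C'}}^{(j+1)}$. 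Consequently, the bijection $C\mapsto\widetilde{C}$ descends to a bijection between $\{p_{C}^{(j+1)}\}$ and $\{p_{\widetilde{C}}^{(j+1)}\}$ as $C$ ranges over $j$-matching separatrices, so these sets have the same cardinality; combined with the reformulation above, this yields the claim.

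The main obstacle is the iterated use of Theorem~\ref{thm:blowups_same_emb_top} to propagate topological invariance of infinitely near points to arbitrary depth $j$, requiring careful identification of the reducible curve germ produced at each successive blow-up. A subtler conceptual point is that dicriticality at $p_{V}^{(j)}$ is governed by the \emph{variety} of tangent directions of separatrices at that point, not by the number of separatrices: a non-dicritical germ at a point may still admit infinitely many separatrices (if some subsequent blow-up reveals a dicritical singularity), but they will then all share a common tangent direction at the original point.
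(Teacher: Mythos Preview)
Your argument is correct and is precisely the kind of reasoning the paper leaves implicit: the corollary is stated immediately after Theorem~\ref{thm:blowups_same_emb_top} with no proof, so the authors evidently regard it as a direct consequence of Zariski's result, and your write-up supplies exactly that deduction. Your key steps---(a) the characterization ``dicritical at $p_{V}^{(j)}$ iff the set of tangent directions of separatrices there is infinite'', (b) the identification of non-exceptional separatrices at $p_{V}^{(j)}$ with $j$-matching elements of $\mathrm{Sep}_{0}(\mathcal{F})$, and (c) the reduction to the topological invariance of the contact order between pairs (and triples) of branches via iterated use of Theorem~\ref{thm:blowups_same_emb_top}---are all sound, and your closing remark distinguishing ``infinitely many separatrices'' from ``infinitely many tangent directions'' addresses the one genuine pitfall.
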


\begin{definition}\label{def: R}
Let $\mathcal{F}$ be a germ of singular holomorphic foliation
at $(\mathbb{C}^{2},0)$, and $V$ be a separatrix of $\mathcal{F}$. For each
$k\in\mathbb{Z}_{+}$, we define
\[
R(\mathcal{F},V,k):=\sum_{j=0}^{k-1}\frac{m(V^{(j)},p_V^{(j)})}{m(V,0)}\cdot\widetilde{\nu}\left(  \mathcal{F}_V^{(j)},p_V^{(j)}\right).
\]
\end{definition}


\begin{remark}
$R(\mathcal{F},V,k)\geq 0, \forall k$.	
\end{remark}

 \begin{lemma}\label{lemma1}
 Let $\mathcal{F}$ be a germ of holomorphic foliation at $(\mathbb{C}^2,0)$. There exist a separatrix $V\in \operatorname{Sep}_0(\mathcal{F})$ and a positive integer number $\delta$ such that $m(V^{(\delta-1)}, p_V^{(\delta-1)})=1$ and 
 \begin{itemize}
  \item [i)] $\operatorname{ind}_{p_V^{(\delta)}}(\mathcal{F}_V^{(\delta)}|_{V^{(\delta)}})=1$ if $\mathcal{F}_V^{(j)}$ is nondicritical around $p_V^{(j)}$ for all $j$;
  \item [ii)] $\operatorname{ind}_{p_V^{(\delta)}}(\mathcal{F}_V^{(\delta)}|_{V^{(\delta)}})=0$ if $\mathcal{F}^{(\delta-1)}$ is dicritical around $p_V^{(\delta-1)}$.
 \end{itemize}

 \end{lemma}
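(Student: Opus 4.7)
My plan is to establish the existence of $V$ and $\delta$ by a dichotomy based on whether the blow-up process tracking the chosen separatrix eventually becomes dicritical. For any irreducible separatrix $V \in \operatorname{Sep}_0(\mathcal{F})$, the strict transform multiplicity $m(V^{(j)}, p_V^{(j)})$ is non-increasing and eventually reaches $1$, and once $V^{(j)}$ becomes smooth it remains so under further blow-ups. Consequently the smoothness condition $m(V^{(\delta-1)}, p_V^{(\delta-1)}) = 1$ can be secured by taking $\delta$ sufficiently large along the blow-up sequence; the real content of the lemma is securing the stated index at $p_V^{(\delta)}$.

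For case (ii), suppose that $\mathcal{F}_V^{(\delta-1)}$ is dicritical at $p_V^{(\delta-1)}$. After the blow-up $\pi_\delta$, the new exceptional divisor $E_V^{(\delta)}$ is not invariant by $\mathcal{F}_V^{(\delta)}$, and only finitely many points on $E_V^{(\delta)}$ are singular for $\mathcal{F}_V^{(\delta)}$. I would then choose $V$ generically among the separatrices of $\mathcal{F}$ whose strict transforms pass through $p_V^{(\delta-1)}$, so that $V^{(\delta-1)}$ is smooth there (a generic separatrix at a dicritical singularity is smooth) and $p_V^{(\delta)}$ is a regular point of $\mathcal{F}_V^{(\delta)}$. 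At such a regular point, the defining vector field is non-vanishing and tangent to $V^{(\delta)}$, hence $\operatorname{ind}_{p_V^{(\delta)}}(\mathcal{F}_V^{(\delta)}|_{V^{(\delta)}}) = 0$.

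For case (i), suppose $\mathcal{F}_V^{(j)}$ is nondicritical at $p_V^{(j)}$ for all $j$. By Seidenberg's theorem the blow-up process eventually produces a reduced foliation at $p_V^{(\delta)}$, with $V^{(\delta)}$ smooth and transverse to the invariant divisor $E_V^{(\delta)}$. In local coordinates $(x,y)$ at $p_V^{(\delta)}$ adapted to $V^{(\delta)} = \{y=0\}$ and $E_V^{(\delta)} = \{x=0\}$, the reduced vector field takes the form $x\alpha(x,y)\partial_x + y\beta(x,y)\partial_y$, and the index of its restriction to $V^{(\delta)}$ equals $1 + \operatorname{ord}_0 \alpha(x,0)$, which equals $1$ precisely when $\alpha(0,0) \neq 0$, i.e., when $V^{(\delta)}$ is either the separatrix of a non-degenerate reduced singularity or the strong separatrix of a saddle-node. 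I would then select $V$ so that its strict transform at level $\delta$ plays this ``good'' role, which amounts to tracing back a transverse separatrix of index $1$ at a tip of the resolution divisor.

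The hard part is the selection step in case (i): verifying that a ``good'' transverse separatrix (with $\alpha(0,0) \neq 0$) always exists at some tip of the resolution tree and descends to an irreducible separatrix of $\mathcal{F}$ at origin. At each reduced singularity at least one of the two local separatrices has index $1$, but the real subtlety is to ensure that the index-$1$ separatrix is the transverse one (not the tangent one) at some tip; in the worst case one must exclude the pathological configuration where every transverse tip separatrix is the weak separatrix of a saddle-node, presumably via a combinatorial argument on the dual graph of the resolution combined with the Camacho-Sad index formula.
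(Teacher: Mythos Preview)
Your proposal is correct and follows the paper's approach. The paper treats only case (i) explicitly: it passes to the Seidenberg reduction, writes the reduced vector field as $x(1+a)\partial_x + \lambda y(1+b)\partial_y$ with $V^{(\delta)}=\{x=0\}$ and $E^{(\delta)}=\{y=0\}$, invokes the Camacho--Sad index in a single line to arrange $\lambda\neq 0$ (precisely the ``hard part'' you isolate in your final paragraph), and then reads off $\operatorname{ind}=\operatorname{ord}_0\bigl(\lambda t(1+b(0,t))\bigr)=1$; case (ii) is not written out in the paper at all.
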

\begin{proof}
	Without loss of generality, we may suppose that there is a number $\delta\geq
	n_{V}$ such that both $V^{(\delta)}$ and $\mathcal{F}^{(\delta)}$ are
	resolved. More precisely, there are local coordinates $(x,y)$ in a
	neighborhood of $p_{\delta}=V^{(\delta)}\cap E^{(\delta)}$ such that
	$V^{(\delta)}=(x=0)$, $E^{(\delta)}=(y=0)$ and $\mathcal{F}^{(\delta)}$ is
	given by the vector field%
	\[
	X^{(\delta)}(x,y)=x(1+a(x,y))\frac{\partial}{\partial x}+\lambda
	y(1+b(x,y))\frac{\partial}{\partial y},
	\]
	where $a,b\in\mathcal{O}_{2}$ vanish at the origin. Furthermore, it follows
	from the Camacho-Sad index (\cite{Camacho:1982}) that we may also suppose that
	$V^{(\delta)}$ is not the weak separatrix of a saddle node, i.e., that
	$\lambda\neq0$. In particular, the Puiseux parametrization of $V^{(\delta)}$
	is given by $\alpha(t)=(0,t)$. Now recall from \cite[Proposition
	3]{CamachoLS:1984} that there exists a unique germ of holomorphic vector
	field $X_{1}(t)=f_{1}(t)\partial/\partial t$ at $(\mathbb{C},0)$ such that
	$X^{(\delta)}\circ\alpha(t)=d\alpha(t)\cdot X_{1}(t)$ for small enough
	$t\in\mathbb{C}$. Moreover, $\mathrm{\operatorname*{ind}}(X^{(\delta
		)}|_{V^{(\delta)}})=\mathrm{\operatorname*{ord}}_{0}\left(  X_{1}\right)  $.
	Since $X^{(\delta)}\circ\alpha(t)=\lambda t(1+b(0,t))\frac{\partial}{\partial
		y}$ and $d\alpha(t)\cdot X_{1}(t)=f_{1}(t)\frac{\partial}{\partial y},$ then%
	\[
	\mathrm{\operatorname*{ind}}(X_V^{(\delta)})=\mathrm{\operatorname*{ord}}_{0}\left(  f_{1}(t)\right)
	=\mathrm{\operatorname*{ord}}_{0}(\lambda t(1+b(0,t)))=1.
	\]
	
\end{proof}

\begin{lemma}\label{Lemma: Index}
	Let $\mathcal{F}$ be a germ of holomorphic
foliation by curves at $(\mathbb{C}^n,0)$ determined by the vector field $X$. Let $V$ be a separatrix of $\mathcal{F}$ at $0$. Then
	\begin{equation}
		\mathrm{ind}_{p_V^{(1)}}(\mathcal{F}_V^{(1)}|_{V^{(1)}}) =\mathrm{ind}_0(\mathcal{F}|_{V})- \operatorname{ord} _{0}\alpha_V\cdot \widetilde{\nu}(\mathcal{F},0).
	\end{equation}
\end{lemma}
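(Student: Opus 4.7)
The plan is to compute both indices directly from a single Puiseux parametrization of $V$, and to track how the defining relation $X\circ\alpha_V = g\cdot\alpha_V'$ transforms when one pulls $X$ back by $\pi$ and then passes to the saturated vector field $\overline{X} = \ell^{-p}\widetilde{X}$.

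Choose linear coordinates $(x,y)$ so that the tangent cone of $V$ at $0$ is the line $\{y=0\}$ (the opposite case is entirely symmetric). Write the Puiseux parametrization of $V$ as $\alpha_V(t)=(t^{k},\phi(t))$ with $k = m(V,0) = \mathrm{ord}_0\alpha_V$ and $\mathrm{ord}_0\phi > k$. By Proposition~3 of \cite{CamachoLS:1984} (already invoked in Section~2), there is a unique $g\in\mathcal{O}_1$ such that $X\circ\alpha_V = g\cdot\alpha_V'$ and $\mathrm{ord}_0 g = \mathrm{ind}_0(\mathcal{F}|_V)$. In the blow-up chart $\pi(u,v)=(u,uv)$, the natural lift $\widetilde{\alpha}_V(t) := (t^{k},\phi(t)/t^{k})$ satisfies $\pi\circ\widetilde{\alpha}_V = \alpha_V$, passes through $p_V^{(1)}=(0,0)$ and, as a quick check on exponents shows, keeps $t$ as a uniformizer, so it is a Puiseux parametrization of $V^{(1)}$ at $p_V^{(1)}$.

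Differentiating $\pi\circ\widetilde{\alpha}_V = \alpha_V$ gives $d\pi\cdot\widetilde{\alpha}_V' = \alpha_V'$, and by definition of $\widetilde{X} = \pi^{\ast}X$ one has $d\pi\cdot(\widetilde{X}\circ\widetilde{\alpha}_V) = X\circ\alpha_V = g\cdot\alpha_V'$. Since $d\pi$ is injective off the exceptional divisor, continuity yields
\begin{equation*}
\widetilde{X}\circ\widetilde{\alpha}_V = g\cdot\widetilde{\alpha}_V'.
\end{equation*}
With $\ell = u$ defining $E^{(1)}$ locally and $p = \widetilde{\nu}(\mathcal{F},0)$, one has $\overline{X} = \ell^{-p}\widetilde{X}$, so
\begin{equation*}
\overline{X}\circ\widetilde{\alpha}_V = \widetilde{g}\cdot\widetilde{\alpha}_V', \qquad \widetilde{g}(t) := \ell(\widetilde{\alpha}_V(t))^{-p}g(t) = t^{-pk}g(t),
\end{equation*}
where the last equality uses $\ell(\widetilde{\alpha}_V(t))=t^{k}$. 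Applying Proposition~3 of \cite{CamachoLS:1984} to $\overline{X}$ along $V^{(1)}$ then gives
\begin{equation*}
\mathrm{ind}_{p_V^{(1)}}(\mathcal{F}_V^{(1)}|_{V^{(1)}}) = \mathrm{ord}_0\widetilde{g} = \mathrm{ord}_0 g - p k = \mathrm{ind}_0(\mathcal{F}|_V) - \mathrm{ord}_0\alpha_V\cdot\widetilde{\nu}(\mathcal{F},0),
\end{equation*}
which is the claim.

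The main subtlety lies in the identification of the saturation exponent $p$ in $\overline{X} = \ell^{-p}\widetilde{X}$ with $\widetilde{\nu}(\mathcal{F},0)$. A direct coordinate computation using the initial part of $X$ shows that $\pi^{\ast}X$ vanishes along $E^{(1)}$ to order exactly $\nu(\mathcal{F},0)-1$ in the nondicritical case; in the dicritical case the identity $g_{\nu}(1,v)-v f_{\nu}(1,v)\equiv 0$ for the leading homogeneous part of $X$ forces an additional factor of $\ell$, accounting precisely for the $+1$ distinguishing the two cases in the definition of $\widetilde{\nu}(\mathcal{F},0)$.
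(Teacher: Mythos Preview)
Your proof is correct and follows essentially the same route as the paper's: both lift the Puiseux parametrization through the blow-up, use the relation $X\circ\alpha_V=g\cdot\alpha_V'$ from \cite[Proposition~3]{CamachoLS:1984}, and track how the order of $g$ drops by $k\cdot\widetilde{\nu}$ when passing from $\widetilde{X}$ to the saturated field $\overline{X}=\ell^{-\widetilde{\nu}}\widetilde{X}$. Your derivation of $\widetilde{X}\circ\widetilde{\alpha}_V=g\cdot\widetilde{\alpha}_V'$ via injectivity of $d\pi$ off the divisor is slightly more direct than the paper's pullback bookkeeping, and your closing remark justifying $p=\widetilde{\nu}(\mathcal{F},0)$ makes explicit what the paper simply asserts, but the substance is the same.
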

\begin{proof}
Let $\widetilde{X}=\pi^{\ast}X$ be the pullback of $X$ by the blowing up map $\pi(x,z)=(x,zx)$ in the chart $(x,z), z=x(x_{2},\dots,x_{n})$. For short, we shall use $\widetilde{\nu}$ in order to refer to $ \widetilde{\nu}(\mathcal{F},0)$ throughout this proof. We claim that 
\begin{equation}
	\operatorname{ord}_{0}(\widetilde{\alpha} _V^{*}\overline{X})=\operatorname{ord}_{0}(\alpha_V^{*}X )-\operatorname{ord} _{0}\alpha_V\cdot \widetilde{\nu}
	\label{formula_local_um}
\end{equation}
where $\widetilde{X}=x^{\widetilde{\nu}}\overline{X}$.
In fact, 
\begin{equation}  \label{eq: tildepull}
	\begin{split}
		\operatorname{ord}_{0}(\widetilde{\alpha} _V^{*}\widetilde{X}) & =\operatorname{ord}_{0}(\widetilde{\alpha}_V^{*}(x^{\widetilde{\nu}}\overline{X}) )= \operatorname{ord}_{0}(\widetilde{\alpha }_V^{*}(x^{\widetilde{\nu}}) \cdot\widetilde{\alpha}_V^{*}\overline{X}) \\
		& =\operatorname{ord}_{0}(( x \circ\widetilde{\alpha}_V)^{\widetilde{\nu}}) + \operatorname{ord}		_{0}( \widetilde{\alpha}_V^{*}\overline{X}).
	\end{split}%
\end{equation}
We may write $\alpha_V(t) = (t^{k},\phi_V(t))$ for some holomorphic map-germ 
$\phi_V$ such that $\operatorname{ord}_{0}\phi_V = r >k$, where $k=m(V,0)$. Then $\operatorname{ord}_{0}(x\circ\widetilde{\alpha }_V)= k = \operatorname{ord}_{0} \alpha_V$. Substituting
this equation in \eqref{eq: tildepull}, we obtain 
\begin{equation}  \label{eq: part1claim5.3}
	\operatorname{ord}_{0}(\widetilde{\alpha} _V^{*}\widetilde{X}) = \widetilde{\nu} \cdot \operatorname{ord}_{0}\alpha_V + \operatorname{ord}_{0}(\widetilde{\alpha} _V^{*}\overline{X}).
\end{equation}
Furthermore, 
\begin{equation}  \label{eq: part2claim5.3}
	\operatorname{ord}_{0}(\widetilde{\alpha} _V^{*}\widetilde{X})=\mathrm{ord}_{0}(\widetilde{\alpha }_V^{*}(\pi^{*}X)) = \operatorname{ord}_{0}((\pi \circ\widetilde{\alpha}_V)^{*}X) = \operatorname{ord}_{0}(\alpha_V^{*}X).
\end{equation}
Hence, from \eqref{eq: part1claim5.3} and \eqref{eq: part2claim5.3}, we
obtain \eqref{formula_local_um}. Thus, 
\begin{equation*}
	\operatorname{ord}_{0}(\widetilde{\alpha_V} ^{*}\overline{X}) = \mathrm{ord}_{0}(\alpha_V^{*}X)- \operatorname{ord} _{0}\alpha_V\cdot \widetilde{\nu}. 
\end{equation*}

\noindent The restriction $\alpha_{V} \colon D\setminus \{0\}\to V \setminus\{0\}$ is
biholomorphic and $\alpha_{V}^{-1}:V\setminus \{0\}\to D \setminus \{0\}$ is
well-defined. The tangent space of $V$ at $\alpha_V(t)$ is generated by $\alpha^{\prime }_V(t)$. Consider $d\alpha_V \colon TD \setminus\{0\}\to \mathbb{C}$
given by $d\alpha_V (p) = \alpha_V^{\prime }(p)$. Note that $d\alpha_V^{-1} \colon TV \setminus \{0\}\to \mathbb{C}$ is well defined. Hence, we
can consider the pullback $\alpha^{*}_V(X)(t) = \sum_{j=\nu}^{n}\frac{1}{\alpha_j^{\prime }(t)}f_j(\alpha(t))\frac{\partial}{\partial t}$ for all $t\neq 0$. It follows from \cite[Proposition 3]{CamachoLS:1984} that there is
a holomorphic function $g\colon D\to \mathbb{C}$ such that  $ (X\circ
\alpha_V) =gd\alpha_V = g\alpha^{\prime }_V.  $  For each $t\in D\setminus
\{0\}$, It follows from \cite[Proposition 3]{CamachoLS:1984} that  $ \mathrm{ord}_0(\alpha^{*}_V(X)) = \mathrm{ord}_0(g) = \mathrm{ind}_0(\mathcal{F}|_V).  $ Hence, $\mathrm{ord}_{0}(\alpha_V^{*}X)=\mathrm{ind}_0(\mathcal{F}|_{V})$ and $\mathrm{ord}_{0}(\widetilde{\alpha_V} ^{*}\overline{X})=\mathrm{ind}_{p_V^{(1)}}(\mathcal{F}_V^{(1)}|_{V^{(1)}})$. This complete the proof.
\end{proof}

 \begin{lemma} \label{lemma:inequality_invariant}
 Let $\mathcal{F},\mathcal{G}$ be two topologically equivalent holomorphic foliations  at $(\mathbb{C}^{2},0)$. 
Then there are a separatrix $V$ of $\mathcal{F}$ and a positive integer number $\delta$ such that 
$$R(\mathcal{F},V,\delta) \geq  R(\mathcal{G},\varphi(V),\delta).$$
 \end{lemma}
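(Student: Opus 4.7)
The plan is to derive a telescoping identity that rewrites $m(V,0)\,R(\mathcal{F},V,k)$ as the drop of the separatrix index between the origin and $p_V^{(k)}$, and then exploit the topological invariance of both the separatrix index at $0$ and the embedded combinatorics of $V$ to reduce the desired inequality to comparing residual indices at a single final stage. First I would iterate Lemma~\ref{Lemma: Index} along the sequence of centers $p_V^{(0)},p_V^{(1)},\dots$: its proof is local at the point that is being blown up and applies verbatim at each $p_V^{(j)}$. Summing the resulting identities for $j=0,\dots,k-1$ produces
\[
m(V,0)\,R(\mathcal{F},V,k)=\mathrm{ind}_{0}(\mathcal{F}|_V)-\mathrm{ind}_{p_V^{(k)}}\bigl(\mathcal{F}_V^{(k)}|_{V^{(k)}}\bigr),
\]
and the same formula holds for $\mathcal{G}$ along the resolution of $\varphi(V)$, which is a separatrix of $\mathcal{G}$ by Proposition~\ref{thm_Remmert-Stein}.

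Next I would combine the topological invariance of the separatrix index (the Camacho--Lins~Neto--Sad theorem already invoked in the proof of Proposition~\ref{invariace_ratio}) with Zariski's topological invariance of the multiplicity of a plane curve and of its full embedded resolution. This yields $\mathrm{ind}_0(\mathcal{F}|_V)=\mathrm{ind}_0(\mathcal{G}|_{\varphi(V)})$, $m(V,0)=m(\varphi(V),0)$, and $m(V^{(j)},p_V^{(j)})=m(\varphi(V)^{(j)},p_{\varphi(V)}^{(j)})$ for every $j$. Subtracting the two telescoping identities gives
\[
m(V,0)\bigl[R(\mathcal{F},V,\delta)-R(\mathcal{G},\varphi(V),\delta)\bigr]=\mathrm{ind}_{p_{\varphi(V)}^{(\delta)}}\bigl(\mathcal{G}_{\varphi(V)}^{(\delta)}|_{\varphi(V)^{(\delta)}}\bigr)-\mathrm{ind}_{p_V^{(\delta)}}\bigl(\mathcal{F}_V^{(\delta)}|_{V^{(\delta)}}\bigr),
\]
so the lemma reduces to choosing $V$ and $\delta$ so that the right-hand side is non-negative.

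At this point I would take $V$ and $\delta$ from Lemma~\ref{lemma1}, so that the subtracted index equals $1$ in the fully nondicritical case (i) and $0$ in the dicritical case (ii); Corollary~\ref{cor:dicritical_blow-up_invariance} transfers the same dichotomy to $\mathcal{G}$ along the resolution of $\varphi(V)$. The step I expect to be the main obstacle is bounding the remaining index for $\mathcal{G}$ from below. In case (ii) the bound $\mathrm{ind}\geq 0$ is immediate from \eqref{ineq_1} (zero at a regular point, at least one at a singular point), but in case (i) I must upgrade this to $\mathrm{ind}\geq 1$, which forces me to prove that $p_{\varphi(V)}^{(\delta)}$ is an actual singularity of $\mathcal{G}_{\varphi(V)}^{(\delta)}$. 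The argument would exploit the nondicriticality of the whole sequence: the divisor $E_{\varphi(V)}^{(\delta)}$ is then invariant, while $\varphi(V)^{(\delta)}$ is a separatrix not contained in it, so two distinct invariant local curves through a common point force a singularity there, giving $\nu\geq 1$ and hence $\mathrm{ind}\geq 1$ via \eqref{ineq_1}. Combining the two cases delivers the desired $R(\mathcal{F},V,\delta)\geq R(\mathcal{G},\varphi(V),\delta)$.
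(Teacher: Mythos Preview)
Your proposal is correct and follows essentially the same route as the paper's proof: iterate Lemma~\ref{Lemma: Index} to obtain the telescoping identity (the paper's Claim~\ref{claim:3} and its equation~\eqref{eq:Rindex}), invoke the topological invariance of $\mathrm{ind}_0$ and of $m(V,0)$, pick $V$ and $\delta$ from Lemma~\ref{lemma1}, and then use Corollary~\ref{cor:dicritical_blow-up_invariance} to control the residual index on the $\mathcal{G}$-side. You are in fact more explicit than the paper at the last step: the paper simply writes ``by Corollary~\ref{cor:dicritical_blow-up_invariance}, $R(\mathcal{F},V,\delta)\geq R(\mathcal{G},\widetilde{V},\delta)$'', whereas you spell out why the nondicritical case forces $p_{\varphi(V)}^{(\delta)}$ to be singular (two distinct invariant germs through it) and hence $\mathrm{ind}\geq 1$, and why the dicritical case only needs $\mathrm{ind}\geq 0$. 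One minor remark: the equality of the intermediate multiplicities $m(V^{(j)},p_V^{(j)})=m(\varphi(V)^{(j)},p_{\varphi(V)}^{(j)})$ that you record is true but not actually used in your argument, since after subtracting the two telescoping identities only the stage-$0$ and stage-$\delta$ data survive.
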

 
 \begin{proof}
 It follows from Lemma \ref{lemma1} that there
 exist a separatrix $V\in \operatorname{Sep}_0(\mathcal{F})$ and a number $\delta \in \mathbb{N}\setminus \{0\}$ such that $m(V^{(\delta-1)}, p_V^{(\delta-1)})=1$ and 
 \begin{itemize}
  \item [i)] $\operatorname{ind}_{p_V^{(\delta)}}(\mathcal{F}_V^{(\delta)}|_{V^{(\delta)}})=1$ if $\mathcal{F}_V^{(j)}$ is nondicritical around $p_V^{(j)}$ for all $j$;
  \item [ii)] $\operatorname{ind}_{p_V^{(\delta)}}(\mathcal{F}_V^{(\delta)}|_{V^{(\delta)}})=0$ if $\mathcal{F}^{(\delta-1)}$ is dicritical around $p_V^{(\delta-1)}$.
 \end{itemize}
We assume that $\delta$ is the minimal positive integer number satisfying one of the above items.

 Let $\widetilde{V}=\varphi(V)$, where $\varphi\colon (\C^2,0)\to (\C^2,0)$ is a homeomorphism that gives the topological equivalence between $\mathcal{F}$ and $\mathcal{G}$.

\begin{claim}
	\label{claim:3} For each $k\in\mathbb{Z}_{+}$, the following identity holds
	\begin{equation}
		\operatorname*{ind}\nolimits_{p_V^{(k)}}\left( \mathcal{F}_V^{(k)}|_{V^{(k)}}\right)
		=\operatorname*{ind}\nolimits_{0}(\mathcal{F}|_{V})-\sum_{j=0}^{k-1}m(V^{(j)},p_V^{(j)})\cdot\widetilde{\nu}\left(\mathcal{F}_V^{(j)},p_V^{(j)}\right)  .
		\label{eq:IndexBlow}
	\end{equation}
\end{claim}

Indeed, it
follows from \ref{Lemma: Index} that
\[
\operatorname*{ind}\nolimits_{p_V^{(k)}}(\mathcal{F}_V^{(k)}|_{V^{(k)}})=\operatorname*{ind}\nolimits_{p_V^{(k-1)}}(\mathcal{F}_V^{(k-1)}|_{V^{(k-1)}})-m(V^{(k-1)},p_V^{(k-1)})\cdot\widetilde{\nu}(\mathcal{F}_V^{(k-1)},p_V^{(k-1)}).
\]
Repeating the argument for $\mathcal{F}_V^{\left(  k-2\right)  }$ and so
on, we obtain 
\begin{align*}
	\operatorname*{ind}\nolimits_{p_V^{(k)}}(\mathcal{F}_V^{(k)}|_{V^{(k)}}) =\operatorname*{ind}\nolimits_{0}(\mathcal{F}|_{V})-\sum_{j=0}^{k-1}m(V^{(j)},p_V^{(j)})\cdot \widetilde{\nu}(\mathcal{F}_V^{(j)},p_V^{(j)}),
\end{align*}
and this proves Claim \ref{claim:3}.

Now, it follows from Claim \ref{claim:3} that
\begin{equation}
	R(\mathcal{F},V,k)=\frac{\operatorname*{ind}_{0}(\mathcal{F}|_{V})-\operatorname*{ind}\nolimits_{p_V^{(k)}}(\mathcal{F}_V^{(k)}|_{V^{(k)}})}{m(V,0)} \label{eq:Rindex}
\end{equation}
for every $k\in\mathbb{Z}_{+}$. Picking $k=\delta$, we obtain
\[
R(\mathcal{F},V,\delta)=\left\{
\begin{array}
	[c]{ll}
	\frac{\operatorname*{ind}_{0}(\mathcal{F}|_{V})-1}{m(V,0)} &  \text{ if } \mathcal{F}^{(\delta-1)}\text{ is nondicritical around } p_V^{(\delta-1)},\\
	\frac{\operatorname*{ind}_{0}(\mathcal{F}|_{V})}{m(V,0)} & \text{ if } \mathcal{F}^{(\delta-1)}\text{ is dicritical around } p_V^{(\delta-1)}.
\end{array}\right.
 \]
Similarly,%
\[
R(\mathcal{\mathcal{G}},\widetilde{V},\delta)=\frac{\operatorname*{ind}_{0}(\mathcal{G}|_{\widetilde{V}})-\operatorname*{ind}_{p_{\widetilde{V}}^{(\delta)}}(\mathcal{G}^{(\delta)}|_{\widetilde{V}^{(\delta)}})}{m(\widetilde{V},0)}.
\]
Since $\operatorname*{ind}_{0}(\mathcal{F}|_{V})=\operatorname*{ind}_{0}(\mathcal{G}|_{\widetilde{V}})$ and
$m(V,0)=m(\widetilde{V},0)$, then by Corollary \ref{cor:dicritical_blow-up_invariance}, $R(\mathcal{F},V,\delta)\geq
R(\mathcal{\mathcal{G}},\widetilde{V},\delta)$. This finishes the proof of Lemma
\ref{lemma:inequality_invariant}.
\end{proof}

In what follows, let
\[
\Gamma(\mathcal{F},V,k):=\left\{
\begin{array}
	[c]{ccc}
	0 & \text{if} & k=1,\\
	\sum_{j=1}^{k-1}\frac{m(V^{(j)},p_V^{(j)})}{m(V,0)}\cdot\widetilde{\nu
	}(\mathcal{F}_V^{(j)},p_V^{(j)}) & \text{if} & k>1.
\end{array}
\right.
\]

\begin{corollary}\label{Corollary_1}
$\nu(\mathcal{F},0)=1+R(\mathcal{F},V,\delta) \iff \Gamma(\mathcal{F},V,\delta) \equiv 0$
\end{corollary}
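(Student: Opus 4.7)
The plan is a purely algebraic manipulation of the two defining sums, together with the case split in the definition of $\widetilde{\nu}$. The whole statement is, at heart, a re-packaging of the definitions of $R$ and $\Gamma$ once one recognizes that they differ by exactly the $j=0$ summand of Definition \ref{def: R}.

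First, I would isolate that $j=0$ summand. Since $V^{(0)}=V$ and $p_V^{(0)}=0$, it reads
$$\frac{m(V^{(0)},p_V^{(0)})}{m(V,0)}\cdot \widetilde{\nu}\bigl(\mathcal{F}_V^{(0)},p_V^{(0)}\bigr)=\widetilde{\nu}(\mathcal{F},0),$$
and comparing with the definition of $\Gamma(\mathcal{F},V,\delta)$ (which sums the remaining indices $1\leq j\leq \delta-1$), one obtains the identity
$$R(\mathcal{F},V,\delta)=\widetilde{\nu}(\mathcal{F},0)+\Gamma(\mathcal{F},V,\delta).$$

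Second, I would invoke the piecewise formula for $\widetilde{\nu}$. In the nondicritical regime at $0$ — the relevant setting here, since this is the case in which the inequality of Lemma \ref{lemma:inequality_invariant} may be saturated and $\nu=1+R$ can possibly hold — one has $\widetilde{\nu}(\mathcal{F},0)=\nu(\mathcal{F},0)-1$. Substituting gives
$$1+R(\mathcal{F},V,\delta)=\nu(\mathcal{F},0)+\Gamma(\mathcal{F},V,\delta),$$
from which the desired equivalence $\nu(\mathcal{F},0)=1+R(\mathcal{F},V,\delta)\iff \Gamma(\mathcal{F},V,\delta)=0$ is immediate. In the dicritical case at $0$ the analogous substitution yields $1+R=1+\nu(\mathcal{F},0)+\Gamma>\nu(\mathcal{F},0)$, so the left-hand equality never holds; this confirms that the corollary is meaningfully applied only in the nondicritical regime at the origin.

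I do not anticipate any genuine obstacle: the argument amounts to bookkeeping with the two sums and the definition of $\widetilde{\nu}$. The only point that requires a moment of care is the explicit invocation of nondicriticality at $0$ in order to pass from $\widetilde{\nu}(\mathcal{F},0)$ to $\nu(\mathcal{F},0)-1$.
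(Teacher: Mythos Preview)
Your proposal is correct and follows essentially the same route as the paper: both arguments reduce to the identity $1+R(\mathcal{F},V,\delta)=\nu(\mathcal{F},0)+\Gamma(\mathcal{F},V,\delta)$, obtained by splitting off the $j=0$ summand of $R$ and using $\widetilde{\nu}(\mathcal{F},0)=\nu(\mathcal{F},0)-1$, from which the equivalence is immediate. You are in fact more explicit than the paper about the nondicriticality hypothesis at the origin that underlies this substitution (the paper uses it silently), and your observation that the dicritical case is vacuous here matches how the corollary is actually invoked later, after nondicriticality has been assumed.
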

\begin{proof}
	For $\delta=1$ the result is obvious; therefore, hereafter we shall suppose that $\delta>1$. Since $R(\mathcal{F},V,\delta)=\nu(\mathcal{F},0)-1+\sum_{j=1}^{\delta-1}\frac{m(V^{(j)},p_V^{(j)})}{m(V,0)}\cdot\widetilde{\nu}(\mathcal{F}_V^{(j)},p_V^{(j)})$, then $\nu(\mathcal{F},0)=1+R(\mathcal{F},V,\delta)-\sum_{j=1}^{\delta-1}\frac{m(V^{(j)},p_V^{(j)})}{m(V,0)}\cdot\widetilde{\nu}(\mathcal{F}_V^{(j)},p_V^{(j)})$. This leads to the
	inequality%
	\begin{equation}
		\nu(\mathcal{F},0)=1+R(\mathcal{F},V,\delta)-\Gamma(\mathcal{F},V,\delta
		)\leq1+R(\mathcal{F},V,\delta).\label{eq:completeEQ}
	\end{equation}
	Hence,%
	\[
	\nu(\mathcal{F},0)\leq\nu(\mathcal{F},0)+\Gamma(\mathcal{F},V,\delta
	)=1+R(\mathcal{F},V,\delta).
	\]
	We conclude that $\nu(\mathcal{F},0)=1+R(\mathcal{F},V,\delta)\iff
	\Gamma(\mathcal{F},V,\delta)\equiv0$.
\end{proof}

\begin{corollary}\label{prop1}
Let $\mathcal{F},\mathcal{G}$ be two topologically equivalent holomorphic foliations at $(\mathbb{C}^{2},0)$. Suppose that $\mathcal{F}$ can be resolved with only one blow-up. Then $\nu(\mathcal{F},0) \geq   \nu(\mathcal{G},0).$
 \end{corollary}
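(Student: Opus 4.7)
The plan is to reduce to the one-blow-up situation and read off $\nu(\mathcal{F},0)$ together with an upper bound for $\nu(\mathcal{G},0)$ using the function $R$ of Definition \ref{def: R}. I would first dispose of the dicritical case. If $\mathcal{F}$ is dicritical at $0$, then $\mathcal{G}$ is also dicritical: by Proposition \ref{thm_Remmert-Stein} the topological equivalence carries $\mathrm{Sep}_{0}(\mathcal{F})$ bijectively onto $\mathrm{Sep}_{0}(\mathcal{G})$, and in dimension two dicriticality is characterized by having infinitely many separatrices. The conclusion then follows (in fact as an equality) from Corollary \ref{invariance_mult_dicritial}.

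Assume henceforth that $\mathcal{F}$ is nondicritical at $0$, whence $\mathcal{G}$ is too. Since $\mathcal{F}$ is resolved after a single blow-up, $\mathcal{F}^{(1)}$ is reduced in a neighborhood of the exceptional divisor $E$, and the singularities of $\mathcal{F}^{(1)}$ on $E$ are finitely many reduced singular points. I claim that at least one such point $p$ admits a smooth separatrix $V^{(1)}$ transverse to $E$ with $\mathrm{ind}_{p}(\mathcal{F}^{(1)}|_{V^{(1)}}) = 1$: the only way for this to fail at every such $p$ would be for every singularity of $\mathcal{F}^{(1)}$ on $E$ to be a saddle-node whose strong separatrix is $E$ itself, and this is ruled out by the Camacho--Sad formula on $E$, since the sum of the indices along $E$ is $-1$ while each such saddle-node contributes $0$.

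Setting $V := \pi(V^{(1)})$, this $V$ is a smooth separatrix of $\mathcal{F}$ at $0$ (so $m(V,0)=1$), and both conditions of Lemma \ref{lemma1} hold with $\delta = 1$ (case (i)). Applying Lemma \ref{lemma:inequality_invariant} with this $V$ and $\delta = 1$ gives
\[
R(\mathcal{F},V,1) \;\geq\; R(\mathcal{G},\varphi(V),1).
\]
By the topological invariance of the multiplicity of plane curves one has $m(\varphi(V),0) = m(V,0) = 1$, and since $\mathcal{G}$ is nondicritical at $0$ the formula in Definition \ref{def: R} gives $R(\mathcal{F},V,1) = \widetilde{\nu}(\mathcal{F},0) = \nu(\mathcal{F},0) - 1$ and $R(\mathcal{G},\varphi(V),1) = \widetilde{\nu}(\mathcal{G},0) = \nu(\mathcal{G},0) - 1$. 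Combining yields $\nu(\mathcal{F},0) \geq \nu(\mathcal{G},0)$, as required.

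The only delicate step I foresee is the selection of the smooth separatrix via the Camacho--Sad argument, which legitimizes taking $\delta = 1$ in Lemma \ref{lemma1}; once this is in hand, the rest is bookkeeping with $R$ and $\widetilde{\nu}$ together with the invariance of the index and of the multiplicity of the chosen separatrix.
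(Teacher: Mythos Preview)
Your argument is correct and follows essentially the same route as the paper: dispose of the dicritical case via Corollary \ref{invariance_mult_dicritial}, take $\delta=1$ because $\mathcal{F}$ is resolved after one blow-up, and compare $R(\mathcal{F},V,1)=\nu(\mathcal{F},0)-1$ with $R(\mathcal{G},\varphi(V),1)=\nu(\mathcal{G},0)-1$ via Lemma \ref{lemma:inequality_invariant}. The only cosmetic differences are that you redo the Camacho--Sad selection of the good separatrix explicitly (this is precisely the content of Lemma \ref{lemma1} that the paper simply cites), and you read off $R(\mathcal{F},V,1)$ directly from Definition \ref{def: R} where the paper passes through Corollary \ref{Corollary_1}; also note that Lemma \ref{lemma:inequality_invariant} is phrased as an existence statement, so strictly speaking you are invoking its \emph{proof} with your chosen $V$ and $\delta=1$ rather than the lemma as stated. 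Incidentally, the equality $m(\varphi(V),0)=1$ is not actually needed for the computation of $R(\mathcal{G},\varphi(V),1)$, since the ratio $m(\widetilde V^{(0)},0)/m(\widetilde V,0)$ is always $1$.
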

\begin{proof}
By Corollary \ref{invariance_mult_dicritial}, we may assume that $\mathcal{F}$ and $\mathcal{G}$ are not dicritical foliations.

Since $\mathcal{F}$ can be resolved with only one blow-up, then $\delta =1$ and $\Gamma(\mathcal{F},V,\delta) \equiv 0$. Besides, it follows from Corollary \ref{Corollary_1} that $\nu(\mathcal{F},0)=1+R(\mathcal{F},V,\delta)$.  It follows from Lemma \ref{lemma:inequality_invariant} that there is a separatrix $V\in \operatorname{Sep}_0 (\mathcal{F})$ such that $R(\mathcal{F},V,1)\geq R(\mathcal{G},\varphi(V),1)$. This implies $$
	 \nu(\mathcal{F},0)-1 \geq \nu(\mathcal{G},0)-1.
	 $$
	 Hence, $\nu(\mathcal{F},0)\geq \nu(\mathcal{G},0)$.
\end{proof}

\begin{corollary}
Let $\mathcal{F},\mathcal{G}$ be two topologically equivalent holomorphic foliations at $(\mathbb{C}^{2},0)$. Suppose that $\mathcal{F}$ and $\mathcal{G}$ can be resolved with only one blow-up. Then $\nu(\mathcal{F},0) =  \nu(\mathcal{G},0).$
 \end{corollary}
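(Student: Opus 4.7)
The statement follows by applying the previous Corollary (prop1) symmetrically in both directions. The plan is to observe that the hypothesis — both $\mathcal{F}$ and $\mathcal{G}$ can be resolved with a single blow-up — together with the symmetry of topological equivalence, puts us in position to invoke the one-sided inequality twice.

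More precisely, let $\varphi\colon(\mathbb{C}^{2},0)\to(\mathbb{C}^{2},0)$ be a homeomorphism realizing the topological equivalence between $\mathcal{F}$ and $\mathcal{G}$. First, since $\mathcal{F}$ is resolved with only one blow-up, Corollary \ref{prop1} applied to the pair $(\mathcal{F},\mathcal{G})$ via $\varphi$ yields
\[
\nu(\mathcal{F},0)\geq \nu(\mathcal{G},0).
\]
Next, note that $\varphi^{-1}\colon(\mathbb{C}^{2},0)\to(\mathbb{C}^{2},0)$ is also a homeomorphism and gives a topological equivalence between $\mathcal{G}$ and $\mathcal{F}$. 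Since $\mathcal{G}$ is, by hypothesis, resolved with only one blow-up, we may again apply Corollary \ref{prop1}, now to the pair $(\mathcal{G},\mathcal{F})$ via $\varphi^{-1}$, obtaining
\[
\nu(\mathcal{G},0)\geq \nu(\mathcal{F},0).
\]
Combining the two inequalities gives $\nu(\mathcal{F},0)=\nu(\mathcal{G},0)$, as desired.

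There is essentially no obstacle here: the only thing to verify is that the hypothesis of Corollary \ref{prop1} is symmetric under the assumption imposed in the present statement, which is immediate because both foliations are assumed to be resolved by a single blow-up. The dicritical case is already absorbed into Corollary \ref{prop1} via Corollary \ref{invariance_mult_dicritial}, so no separate argument is required.
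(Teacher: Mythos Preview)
Your proof is correct and is precisely the argument the paper intends: the corollary is stated without proof immediately after Corollary \ref{prop1}, and the implicit justification is exactly this symmetric application of the one-sided inequality using $\varphi$ and $\varphi^{-1}$.
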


\begin{theorem_A}
Let $\mathcal{F}$ and $\mathcal{G}$ be two topologically equivalent
holomorphic foliations at $(\mathbb{C}^{2},0)$.  Assume that $\pi^*\mathcal{F}$ (resp. $\pi^*\mathcal{G}$) has a singularity $p\in \pi^{-1}(0)$ (resp. $q\in \pi^{-1}(0)$) such that $\pi^*\mathcal{F}|_U$ (resp. $\pi^*\mathcal{G}|_W$) is a strictly nondicritical second type foliation with only convergent separatrices for some open neighborhood $U\subset \widehat{\mathbb{C}^2}$ (resp. $W\subset \widehat{\mathbb{C}^2}$). Then $\nu(\mathcal{F},0)=\nu(\mathcal{G},0)$.
\end{theorem_A}
\begin{proof}
Let $\varphi\colon (\C^2,0)\to (\C^2,0)$ be a homeomorphism that gives the topological equivalence between $\mathcal{F}$ and $\mathcal{G}$.
Let $V$ be a separatrix of $\mathcal{F}$ such that $V^{(1)}\cap \pi^{-1}(0)=\{p\}$. Since we are assuming that $\pi^*\mathcal{F}$ is, around $p$, a strictly nondicritical second type foliation with only convergent separatrices, it follows from the proof of Lemma \ref{lemma:inequality_invariant} that there is a positive integer number $\delta$ $V$ and a separatrix $V$ of $\mathcal{F}$ such that $V^{(1)}\cap \pi^{-1}(0)=\{p\}$ and
$$R(\mathcal{F},V,\delta) \geq  R(\mathcal{G},\widetilde{V},\delta),$$
where $\widetilde{V}=\varphi(V)$. Thus,
$$
\sum_{j=0}^{\delta-1}\frac{m(V^{(j)},p_V^{(j)})}{m(V,0)}\cdot(\nu(\mathcal{F}_V^{(j)},p_V^{(j)})-1)\geq \sum_{j=0}^{\delta-1}\frac{m(\widetilde{V}^{(j)},p_{\widetilde{V}}^{(j)})}{m(\widetilde{V},0)}\cdot(\nu(\mathcal{G}_V^{(j)},p_V^{(j)})-1).
$$
Moreover, for each $j\in \{1,...,\delta-1\}$, $\mathcal{F}_V^{(j)}$ is, around $p_V^{(j)}$, a strictly nondicritical second type foliation with only convergent separatrices. Let $S_V^{(j)}$ (resp. $\mathcal{F}_{\widetilde{V}}^{(j)}$) be the union of all the separatrices of $\mathcal{F}_V^{(j)}$ (resp. $\mathcal{G}_{\widetilde{V}}^{(j)}$) passing through $p_V^{(j)}$ (resp. $p_{\widetilde{V}}^{(j)}$). By \cite[Th\'eor\`eme 3.1.9]{MatteiS:2004}, $\nu(\mathcal{G}_V^{(j)},p_V^{(j)})=m(S_{\widetilde{V}}^{(j)},p_{\widetilde{V}}^{(j)})-1$. It follows from Theorem 1 and Remark on page 163 in \cite{CamachoLS:1984} that $\nu(\mathcal{G}_V^{(j)},p_V^{(j)})\geq  m(\widetilde{V}^{(j)},p_{\widetilde{V}}^{(j)}) -1$. By the topological invariance of the multiplicity of curves and Theorem \ref{thm:blowups_same_emb_top}, we have that $m(S_{\widetilde{V}}^{(j)},p_{\widetilde{V}}^{(j)})=m(S_V^{(j)},p_V^{(j)})$ and $m(\widetilde{V}^{(j)},p_{\widetilde{V}}^{(j)})=m(V^{(j)},p_V^{(j)})$ for all $j$ and $m(\widetilde{V},0)=m(V,0)$. Therefore,
\begin{eqnarray*}
R(\mathcal{G},\widetilde{V},\delta)-\nu(\mathcal{G},0)+1&=& \sum_{j=1}^{\delta-1}\frac{m(\widetilde{V}^{(j)},p_{\widetilde{V}}^{(j)})}{m(\widetilde{V},0)}\cdot(\nu(\mathcal{G}_V^{(j)},p_V^{(j)})-1)\\
                        &\geq &\sum_{j=1}^{\delta-1}\frac{m(\widetilde{V}^{(j)},p_{\widetilde{V}}^{(j)})}{m(\widetilde{V},0)}\cdot(m(S_{\widetilde{V}}^{(j)},p_{\widetilde{V}}^{(j)})-2)\\
                        &=&\sum_{j=1}^{\delta-1}\frac{m(V^{(j)},p_V^{(j)})}{m(V,0)}\cdot(m(S_V^{(j)},p_V^{(j)})-2)\\
                      &=& \sum_{j=1}^{\delta-1}\frac{m(V^{(j)},p_V^{(j)})}{m(V,0)}\cdot(\nu(\mathcal{F}_V^{(j)},p_V^{(j)})-1)\\
                      &=& R(\mathcal{F},V,\delta)-\nu(\mathcal{F},0)+1\\
                      &\geq & R(\mathcal{G},\widetilde{V},\delta)-\nu(\mathcal{F},0)+1.
\end{eqnarray*}
Then $\nu(\mathcal{F},0)\geq \nu(\mathcal{G},0)$. Similarly, we prove $\nu(\mathcal{G},0)\geq \nu(\mathcal{F},0)$. Therefore $\nu(\mathcal{F},0)=\nu(\mathcal{G},0)$.

\end{proof}

\section{Proofs of Theorems B and C}

\subsection{Proof of Theorem B}

\begin{theorem_B}
Let $\mathcal{F}$ and $\mathcal{G}$ be two topologically equivalent
holomorphic foliations at $(\mathbb{C}^{2},0)$. Then $\nu (\mathcal{F})=1$ if and only if $\nu (\mathcal{G})=1$.
\end{theorem_B}
\begin{proof} 
By Corollary \ref{invariance_mult_dicritial}, we may assume that $\mathcal{F}$ and $\mathcal{G}$ are not dicritical foliations.

By Lemma \ref{lemma:inequality_invariant}, there are a separatrix $V$ of $\mathcal{F}$ and a positive integer number $\delta$ such that $$R(\mathcal{F},V,\delta) \geq  R(\mathcal{G},\varphi(V),\delta).$$

Let $\widetilde{V}=\varphi(V)$ and we assume that $\delta$ is the minimal positive integer with the above property.

By using the notation of Section \ref{sec:proof_thm_A}, $R(\mathcal{F},V,\delta)\geq R(\mathcal{G},\widetilde{V},\delta) $ means 
\begin{equation}\label{eq:mult_one_inequality}
		\sum_{j=0}^{\delta-1}\frac{m(V^{(j)},p_V^{(j)})}{m(V,0)}\cdot\widetilde{\nu}(\mathcal{F}_V^{(j)},p_V^{(j)})
		\geq	\sum_{j=0}^{\delta-1}\frac{m(\widetilde{V}^{(j)},p_{\widetilde{V}}^{(j)})}{m(\widetilde{V},0)}\cdot \widetilde{\nu}(\mathcal{G}^{(j)},p_{\widetilde{V}}^{(j)}).
\end{equation}

Note that $\nu(\mathcal{F}_V^{(j)},p_V^{(j)}) \leq  \nu(\mathcal{F},0)=1, \forall j$. Thus,
if $\mathcal{F}_V^{(j)}$ is nondicritical around $p_V^{(j)}$
 for all $j$, then $\widetilde{\nu}(\mathcal{F}_V^{(j)},p_V^{(j)})=0$ for all $j$, which implies that
\begin{equation*}
0\geq  \nu(\mathcal{G},0) -1 + \sum_{j=1}^{\delta-1}\frac{m(\widetilde{V}^{(j)},p_V^{(j)})}{m(\widetilde{V},0)}\cdot\widetilde{\nu}\left(\mathcal{G}^{(j)},p_{\widetilde{V}}^{(j)}\right),
\end{equation*}
and thus
$$
\nu(\mathcal{G},0) \leq 1 - \Gamma(\mathcal{G},\widetilde{V},\delta).
$$
On the other hand, $\Gamma(\mathcal{G},\widetilde{V},\delta)\geq 0$ and $\nu(\mathcal{G},0)\geq 1$. Therefore, 
\begin{equation*}
\nu(\mathcal{G},0) =1.
\end{equation*}
However, if $\mathcal{F}_V^{(j_0)}$ is dicritical around $p_V^{(j_0)}$ for some $j_0$, we may assume that that $\delta=j_0+1$, $\mathcal{F}^{(\delta)}$ is regular at $p_V^{(\delta)}$, $m(V^{(\delta-1)},p_V^{(\delta-1)})=1$, $\widetilde{\nu}(\mathcal{F}^{(\delta-1)},p_V^{(\delta-1)})=1$ and $\widetilde{\nu}(\mathcal{F}_V^{(j)},p_V^{(j)})=0$ for all $j<\delta-1$. Thus, Eq. \ref{eq:mult_one_inequality} gives the following
\begin{equation*}
\frac{1}{m(V,0)}\geq  \nu(\mathcal{G},0) -1 + \sum_{j=1}^{\delta-1}\frac{m(\widetilde{V}^{(j)},p_{\widetilde{V}}^{(j)})}{m(\widetilde{V},0)}\cdot\widetilde{\nu}\left(\mathcal{G}^{(j)},p_{\widetilde{V}}^{(j)}\right),
\end{equation*}
and thus
$$
\nu(\mathcal{G},0) \leq 1 - \Gamma(\mathcal{G},\widetilde{V},\delta)+\frac{1}{m(V,0)}.
$$
Since $m(\widetilde{V},0)=m(V,0)$, we obtain that $\Gamma(\mathcal{G},\widetilde{V},\delta)\geq \frac{1}{m(V,0)}$, and thus $\nu(\mathcal{G},0) \leq 1$.
Therefore, $\nu(\mathcal{G},0)=1$.

\end{proof}

\subsection{Proof of Theorem C}

We denote by $\mathrm{Spect}(DX(0))$ the set of eigenvalues of $DX(0)$ (the
derivative of $X$ at $0$).

\begin{theorem_C}
\label{main_result} 
Let $\mathcal{F}^{X}$ and $\mathcal{F}^{Y}$ be two
topologically equivalent holomorphic foliations at $(\mathbb{C}^{n},0)$. If $\mathrm{Spect}(DX(0))\not =\{0\}$ then $\mathrm{Spect}(DY(0))\not =\{0\}$.
\end{theorem_C}

\begin{proof}
Assume $\mathrm{Spect}(DX(0))\not =\{0\}$.
\begin{claim}
\label{exist_ind_one} There is an irreducible separatrix $V$ of $X$ such
that $\mathrm{\operatorname{ind}}(X|_{V})=1$.
\end{claim}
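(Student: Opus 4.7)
The plan is to produce an invariant curve of $X$ tangent to the eigenspace of a non-zero eigenvalue of $DX(0)$, and then to read the index directly from its Puiseux parametrization. Fix an eigenvalue $\lambda\neq 0$ of $DX(0)$ with eigenvector $v$; after a linear change of coordinates (which preserves both $\mathcal{F}^{X}$ and the algebraic multiplicity), I may assume $v=e_{1}$, so that the first column of $DX(0)$ equals $(\lambda,0,\ldots,0)^{T}$. Writing $X=\sum_{i=1}^{n} f_{i}\,\partial/\partial z_{i}$, this amounts to $\partial f_{i}/\partial z_{1}(0)=\lambda\,\delta_{i1}$; in particular $f_{1}(z)=\lambda z_{1}+h_{1}(z)$ with $h_{1}$ carrying no monomial proportional to $z_{1}$, and $f_{i}(z)$ for $i\geq 2$ has no linear term in $z_{1}$.

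Next, I would invoke the classical holomorphic invariant-manifold theorem: the non-zero eigenvalue $\lambda$ produces a smooth holomorphic invariant curve $V$ of $X$ through $0$ tangent to $e_{1}$. Concretely, one looks for $V$ parametrized by $\alpha(t)=(t,\phi_{2}(t),\ldots,\phi_{n}(t))$ with $\phi_{i}(t)=O(t^{2})$; the invariance equation $X\circ\alpha=g\cdot\alpha'$, projected onto the last $n-1$ coordinates, becomes a system of holomorphic ODEs for $\phi_{2},\ldots,\phi_{n}$ whose hyperbolicity at $t=0$ (courtesy of $\lambda\neq 0$) yields a convergent holomorphic solution by a standard majorant or fixed-point argument of Briot--Bouquet / Hadamard--Perron type. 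Since $V$ so produced is smooth, it is automatically irreducible, hence a genuine separatrix of $X$.

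With $V$ in hand, the index computation is a one-line verification. Because $\alpha'(0)=e_{1}\neq 0$, the first coordinate of the defining identity $X\circ\alpha=g\cdot\alpha'$ reads
\[
g(t)=f_{1}(\alpha(t))=\lambda t+O(t^{2}),
\]
so that $\mathrm{ord}_{0}(g)=1$; equivalently $\mathrm{ind}_{0}(X|_{V})=1$, as required by the claim.

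The main technical point is the second step: the appeal to a smooth holomorphic invariant curve tangent to $v$ in arbitrary dimension $n$. In dimension two this is already built into Camacho--Sad, and a direct first-jet analysis selects a separatrix with the desired tangent direction; for $n\geq 3$ one relies on the holomorphic analogue of the stable/unstable manifold theorem applied to the one-dimensional invariant subspace spanned by the eigenvector of $\lambda$. Everything else reduces to elementary linear algebra and the order computation above.
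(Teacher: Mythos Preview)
Your overall strategy matches the paper's, but the second step---the existence of a holomorphic invariant curve tangent to the eigendirection---has a genuine gap. You fix an \emph{arbitrary} nonzero eigenvalue $\lambda$ and then assert that ``hyperbolicity at $t=0$ (courtesy of $\lambda\neq 0$)'' suffices for a Briot--Bouquet/Hadamard--Perron type argument. It does not. The recursive equations for the Taylor coefficients of $\phi$ in $\alpha(t)=(t,\phi(t))$ involve inverting operators of the form $(k\lambda\,I-A)$, where $A$ is the block of $DX(0)$ on $\mathrm{span}(e_2,\ldots,e_n)$; if some other eigenvalue $\lambda_j$ equals $k\lambda$ for an integer $k\geq 2$, these operators are singular and the formal invariant curve may fail to exist. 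A concrete two-dimensional obstruction: for $X=z_1\,\partial_{z_1}+(2z_2+z_1^2)\,\partial_{z_2}$ one has eigenvalues $1$ and $2$, and looking for a separatrix tangent to $e_1$ leads to $z_1\phi'(z_1)-2\phi(z_1)=z_1^2$, whose $z_1^2$-coefficient equation reads $0=1$. So no invariant curve tangent to $e_1$ exists, even formally; your appeal to Camacho--Sad in dimension two does not help, since that theorem produces \emph{some} separatrix, not one with a prescribed tangent.

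The paper closes this gap by choosing $\mu$ of \emph{maximal modulus} among the eigenvalues. Then $|q\mu-\lambda_i|\geq (q-1)|\mu|\geq \tfrac{q}{2}|\mu|$ for all $q\geq 2$, which is exactly the small-divisor estimate required by the higher-dimensional Briot--Bouquet theorem of Carrillo--Sanz \cite{CarrilloS:2014} to produce a convergent smooth invariant curve tangent to the $\mu$-eigendirection. Once that curve is in hand, your index computation is correct and coincides with the paper's. So the fix is simply: pick the eigenvalue of largest modulus and cite the appropriate convergence theorem, rather than an unspecified invariant-manifold result.
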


\begin{proof}[Proof of Claim \ref{exist_ind_one}]
Let $\mu\in\mathrm{Spect}(DX(0))$ be such that $\left\vert \mu\right\vert
=\max\{\left\vert \lambda\right\vert:\lambda\in \mathrm{Spect}(DX(0))\}$.
From the hypotheses, $\mu\not =0$. Let $v\in\mathbb{S}^{2n-1}$ be an
eigenvector associated to $\mu$ and $E$ be the vector space generated by $v$. We choose linear coordinates $(z_1,\cdots,z_n)$ such that $v=e_{1}:=(1,0,\cdots,0)$. If we write $\mathrm{Spect}(DX(0))=\{\mu,\lambda_{1},\cdots,\lambda_{n-1}\}$ then a simple geometric argument says
that 
\begin{equation*}
\left\vert q\mu-\lambda_{i}\right\vert \geq\frac{|\mu|}{2}\cdot
q,\quad\forall q\in\mathbb{Z},\,q\geq2,\, i=1,\ldots,n-1. 
\end{equation*}
It follows from \cite[Theorem 3.3]{CarrilloS:2014} that there exists a
unique convergent non-singular $1$-dimensional manifold $V$ tangent to $E$
at $0$ invariant by $X$. Let $\alpha\colon(D,0)\rightarrow(\mathbb{C}^{n},0)$
be a Puiseux's parametrization of $V$ in a neighborhood of $0\in V$. By \cite[Proposition 3]{CamachoLS:1984}, there exists a unique holomorphic vector
field $X_{1}(t)=f_{1}(t)\partial/\partial t$ in $D$ such that $X\circ
\alpha(t)= f_{1}(t)\alpha^{\prime }(t) $ for small enough $t\in\mathbb{C}$.
Moreover, $\mathrm{ord}_{0}X_{1}=\mathrm{\operatorname{ind}}(X|_{V})$. Since $V$ is tangent
to $E=\mathbb{C}\cdot e_{1}$, we may assume that $\alpha (t)=(t^{m},\phi(t))$, where $\phi$ is a holomorphic function-germ such that $\mathrm{ord}_{0}\phi>m$. In particular, $m$ is the multiplicity of $V$ at $0$, which we
shall denote in general by $m(V,0)$ (see \cite{Chirka:1989} for a definition
of multiplicity in higher codimension). But recall that $V$ is non-singular;
thus $m=1$. Since $e_{1}$ is an eigenvalue of $DX(0)$, we obtain from the
Taylor expansion of $X$ at the origin that 
\begin{align*}
X\circ\alpha(t) & =DX(0)\cdot\alpha(t)+h.o.t. \\
& =DX(0)\cdot (t,\mathbf{0}) +DX(0)\cdot (0,\phi(t)) + h.o.t. \\
& = \mu \cdot (t,\mathbf{0}) +h.o.t \\
\end{align*}
In particular, $\mathrm{ord}_{0}(X\circ\alpha)=1$. Using the equality $X\circ\alpha(t)=f_{1}(t)\alpha^{\prime}(t)$, we obtain that 
\begin{equation*}
1=\mathrm{ord}_{0}(X\circ\alpha)=\operatorname{ord}_{0}(f_{1}\cdot\alpha^{\prime})=\operatorname{ord}_{0}X_{1}+\mathrm{ord}_{0}\alpha^{\prime}=\mathrm{ord}_{0}X_{1}. 
\end{equation*}
Since $\operatorname{ord}_{0}X_{1}=\operatorname{ind}_0(\left .X \right \vert_{V})$, it follows that $\mathrm{ind}_0(X|_{V})=1$. The result then follows.
\end{proof}

It follows from Claim \ref{exist_ind_one} that there is
an irreducible smooth curve $V$ invariant by $X$ and such that $\mathrm{ind}_0(X|_{V})=1$. Now let $\phi \colon(\mathbb{C}^{n},0)\rightarrow(\mathbb{C}^{n},0)$ be a topological equivalence between $X$ and $Y$. By hypothesis, $W=\phi(V)$ is an analytic curve invariant by $Y$. Moreover, it follows from
the topological invariance of the index along separatrices (see \cite[Theorem B]{CamachoLS:1984}) that $\mathrm{ind}_0(Y|_{W})=1$. The result then
follows from the following claim.
\begin{claim}
\label{spec_DY(0)_dif_0}
Let $Y$ be a germ of holomorphic vector field at $(\mathbb{C}^{n},0)$ admitting an irreducible separatrix $W$ such that $\mathrm{ind}_0(Y|_{W})=1$. Then $\mathrm{Spect}(DY(0))\neq0$.
\end{claim}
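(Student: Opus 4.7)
The plan is to extract from $\mathrm{ind}_0(Y|_W)=1$ the two facts $\nu(Y)=1$ and $\mathrm{in}(Y)|_{C(W,0)}\not\equiv 0$, and then to read off a nonzero eigenvalue of $DY(0)$ by matching leading coefficients in the identity $Y\circ\beta=g\,\beta'$ along a Puiseux parametrization $\beta$ of $W$.

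First, I would apply inequality \eqref{ineq_1} to obtain
\[
\nu(Y)\cdot m(W,0)\ \le\ \mathrm{ind}_0(Y|_W)+m(W,0)-1\ =\ m(W,0),
\]
so $\nu(Y)\le 1$. Since $Y$ has a singularity at the origin, $\nu(Y)\ge 1$, hence $\nu(Y)=1$ and $\mathrm{in}(Y)(z)=DY(0)\cdot z$. Moreover, $r_W(\mathcal{F}^Y)=(\mathrm{ind}_0(Y|_W)-1)/m(W,0)=0$, so $\nu(\mathcal{F}^Y,0)=1+r_W(\mathcal{F}^Y)$; Proposition \ref{prop:formula_mult} then yields $\mathrm{in}(Y)|_{C(W,0)}\not\equiv 0$, i.e., the linear part $DY(0)$ does not annihilate the tangent cone of $W$ at $0$.

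Next, I would choose linear coordinates $(z_1,\dots,z_n)$ so that $C(W,0)=\mathbb{C}\cdot e_1$ and take a Puiseux parametrization $\beta(t)=(t^k,\psi(t))$ with $k=m(W,0)$ and $\mathrm{ord}_0\psi\ge k+1$. By \cite[Proposition 3]{CamachoLS:1984}, there is a holomorphic germ $g$ with $Y\circ\beta(t)=g(t)\beta'(t)$ and $\mathrm{ord}_0 g=\mathrm{ind}_0(Y|_W)=1$; write $g(t)=at+O(t^2)$ with $a\ne 0$. I would then match the coefficient of $t^k$ on each side. Using $Y(z)=DY(0)\cdot z+O(\|z\|^2)$ together with $\|\beta(t)\|=O(t^k)$ and $\mathrm{ord}_0\psi\ge k+1$, every term of $Y(\beta(t))$ other than $t^k\,DY(0)e_1$ has order at least $k+1$, so
\[
Y(\beta(t))\ =\ t^k\,DY(0)\,e_1\ +\ O(t^{k+1}).
\]
Similarly, $\beta'(t)=kt^{k-1}e_1+O(t^k)$, hence $g(t)\beta'(t)=ak\,t^k e_1+O(t^{k+1})$. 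Comparing $t^k$-coefficients yields $DY(0)e_1=ak\,e_1$; since $ak\ne 0$, this exhibits a nonzero eigenvalue of $DY(0)$, whence $\mathrm{Spect}(DY(0))\neq\{0\}$.

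The only non-routine point is the order bookkeeping in the final step: one must verify that every contribution to $Y(\beta(t))$ other than $t^k\,DY(0)e_1$ has order strictly greater than $k$. The relevant estimates are (i) $DY(0)\cdot(0,\psi(t))$ has order $\mathrm{ord}_0\psi\ge k+1$, and (ii) for each $j\ge 2$ the homogeneous piece $Y_j(\beta(t))$ has order $\ge jk\ge k+1$ because $k\ge 1$. Once these are in hand, equating coefficients of $t^k$ is unambiguous and the argument closes.
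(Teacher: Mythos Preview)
Your proof is correct and follows essentially the same approach as the paper: both arguments parametrize $W$ by $\beta(t)=(t^k,\psi(t))$, invoke \cite[Proposition 3]{CamachoLS:1984} to write $Y\circ\beta=g\,\beta'$ with $\mathrm{ord}_0 g=1$, and then match the $t^k$-coefficients to exhibit $e_1$ as an eigenvector of $DY(0)$ with nonzero eigenvalue. Your first paragraph (deducing $\nu(Y)=1$ and invoking Proposition~\ref{prop:formula_mult}) is a pleasant conceptual preview but is not needed for the argument, and your bookkeeping with the factor $k$ is in fact cleaner than the paper's, which drops it.
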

\begin{proof}[Proof of Claim \ref{spec_DY(0)_dif_0}]
Let $\beta(t)=(t^{k},\phi(t))$ be the Puiseux's parametrization of the
separatrix $W$, then recall from \cite[Proposition 3]{CamachoLS:1984} that
there exists a function-germ $g$ such that $Y\circ\beta(t)=g(t)\beta^{\prime}(t)$ and $\mathrm{ord}_{0}(g)=\mathrm{ind}_0(\left. Y\right\vert _{W})=1
$. In particular, $g(t)=a_{0}t+o(t),$ $a_{0}\neq0$. Now we show that there
is $\lambda\neq0$ such that $DY(0)\cdot e_{1}=\lambda e_{1}$. In fact,%
\begin{align*}
DY(0)\cdot(t^{k},0)+o(t^{k}) & =Y\circ\beta(t) \\
& =g(t)\cdot(t^{k-1},\phi^{\prime}(t)).
\end{align*}
This implies that $DY(0)\cdot(t^{k},0)=a_{0}(t^{k},0)+o(t^{k})$ and finally
that 
\begin{equation*}
DY(0)\cdot(1,0)=a_{0}(1,0)+o(t). 
\end{equation*}
Taking the limit as $t\rightarrow0$, the result then follows.
\end{proof}
\end{proof}

As a consequence, we obtain that saddle-nodes are topological invariant.

\begin{corollary}
\label{cor:saddle-node} Let $\mathcal{F}^{X}$ and $\mathcal{F}^{Y}$ be two
topologically equivalent holomorphic foliations at $(\mathbb{C}^{n},0)$
generated by the vector fields $X$ and $Y$, respectively. If $X$ admits a
saddle-node at the origin then $Y$ admits a saddle-node at the origin.
\end{corollary}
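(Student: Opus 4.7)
The plan is to verify the two defining properties of a saddle-node for $Y$ separately. Assume $X$ admits a saddle-node at $0$, so $DX(0)$ has at least one non-zero eigenvalue \emph{and} at least one zero eigenvalue, and argue that the same holds for $Y$.

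The first condition transfers directly via Theorem~C. Since $X$ is a saddle-node, $\mathrm{Spect}(DX(0))\neq\{0\}$, and Theorem~C immediately gives $\mathrm{Spect}(DY(0))\neq\{0\}$, so $DY(0)$ also has a non-zero eigenvalue. Since topological equivalence is symmetric (replace $\phi$ by $\phi^{-1}$), Theorem~C in fact says that ``$DX(0)$ has a non-zero eigenvalue'' is a topological invariant, which settles this half of the argument.

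The second condition --- that $DY(0)$ has a zero eigenvalue --- is the delicate part and is not a direct consequence of Theorem~C. The approach I would use here is the topological invariance of the local Poincar\'e--Hopf index of a holomorphic foliation by curves at an isolated singularity, a classical result (see, e.g., G\'omez-Mont--Seade--Verjovsky). For a generating vector field $X=\sum f_i\,\partial/\partial z_i$ with an isolated zero at $0$, this index equals $\dim_{\C}\mathcal{O}_{\C^n,0}/(f_1,\dots,f_n)$ and it takes the value $1$ precisely when $\det DX(0)\neq 0$, that is, when $DX(0)$ is invertible. Arguing by contradiction, suppose $DY(0)$ has no zero eigenvalue; then $DY(0)$ is invertible, the PH index of $\mathcal{F}^Y$ at $0$ is $1$, and by topological invariance the PH index of $\mathcal{F}^X$ at $0$ is also $1$, forcing $DX(0)$ to be invertible. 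This contradicts the saddle-node hypothesis on $X$. Hence $DY(0)$ has a zero eigenvalue.

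Putting the two halves together, $DY(0)$ has both a non-zero and a zero eigenvalue, so $Y$ admits a saddle-node at the origin. The \emph{main obstacle} is clearly the second step: Theorem~C detects only whether the spectrum is entirely $\{0\}$ and says nothing about an individual zero eigenvalue, so some external ingredient is required. A naive attempt to remain within the Theorem~C machinery --- pulling back, via $\phi^{-1}$, the $n$ smooth eigen-separatrices that a hypothetically invertible $DY(0)$ would produce, and concluding that $DX(0)$ has $n$ linearly independent eigenvectors with non-zero eigenvalues --- fails because a homeomorphism need not preserve linear independence of tangent directions of separatrices at the singularity; distinct eigendirections of $DY(0)$ may collapse to a common tangent direction after $\phi^{-1}$. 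This is why the topological invariance of the PH (equivalently, GSV/Milnor) index appears to be the natural and essentially necessary extra input that upgrades Theorem~C to Corollary \ref{cor:saddle-node}.
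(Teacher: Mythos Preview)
Your proposal is correct and follows essentially the same two-step strategy as the paper: apply Theorem~C to obtain a non-zero eigenvalue for $DY(0)$, then use the topological invariance of the Milnor number $\mu(Z,0)=\dim_{\C}\mathcal{O}_{\C^n,0}/(Z_1,\dots,Z_n)$ together with the equivalence $\mu(Z,0)=1\Leftrightarrow\det DZ(0)\neq 0$ to force a zero eigenvalue for $DY(0)$ by contradiction. The only cosmetic difference is the reference: the paper invokes \cite[Theorem~A]{CamachoLS:1984} for the invariance of $\mu$, whereas you phrase it as invariance of the Poincar\'e--Hopf index and point to G\'omez-Mont--Seade--Verjovsky; these are the same statement in this smooth ambient setting.
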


\begin{proof}
Let us write $X(z)=\sum_{i=1}^{n}X_{i}(z)\frac{\partial}{\partial z_{i}}$ 
and $Y(z)=\sum_{i=1}^{n}Y_{i}(z)\frac{\partial}{\partial z_{i}}.$ Since $X$ admits a saddle-node at $0$, then $\operatorname{Spect}(DX(0))\not =\{0\}$.
By Theorem \ref{main_result}, $\mathrm{Spect}(DY(0))\not =\{0\}$. Assume by
contradiction that all the eigenvalues of $DY(0)$ are non-zero, then $\mu(Y,0):=\dim_{\mathbb{C}}\mathbb{C}\{z_1,...,z_n\}/\langle
Y_1,...,Y_n\rangle =1$, where $\langle Y_1,...,Y_n\rangle$ is the ideal in $\mathbb{C}\{z_1,...,z_n\}$ generated by $Y_1,...,Y_n$. By \cite[Theorem A]{CamachoLS:1984}, $\mu(X,0):=\dim_{\mathbb{C}}\mathbb{C}\{z_1,...,z_n\}/\langle X_1,...,X_n\rangle =1$. Then all the eigenvalues of $DX(0)$ are
non-zero, which is a contradiction with the assumption that $0$ is a
saddle-node for $X$. Therefore, $DY(0)$ has at least one zero eigenvalue,
and thus $0$ is a saddle-node for $Y$.
\end{proof}

\begin{corollary}
Let $\mathcal{F}^{X}$ and $\mathcal{F}^{Y}$ be two topologically equivalent
holomorphic foliations at $(\mathbb{C}^{n},0)$. If $\mathrm{Spect}(DX(0))\not =\{0\}$ then $\nu(\mathcal{F}^{Y})=1$.
\end{corollary}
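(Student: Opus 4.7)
The plan is to derive this corollary as an essentially immediate consequence of Theorem C, using only a Taylor-expansion argument at the origin. Apply Theorem C first to obtain $\operatorname{Spect}(DY(0)) \neq \{0\}$. Since the spectrum of the zero operator is $\{0\}$, this forces $DY(0) \neq 0$ as a linear endomorphism of $\mathbb{C}^n$.

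Next, I would translate the non-vanishing of $DY(0)$ into a statement about orders of components. Writing $Y = \sum_{i=1}^{n} Y_i \partial/\partial z_i$, the entries of the matrix $DY(0)$ are precisely $\partial Y_i/\partial z_j(0)$. Since $DY(0) \neq 0$, some entry $\partial Y_{i_0}/\partial z_{j_0}(0)$ is non-zero, which means the Taylor expansion of $Y_{i_0}$ at $0$ has a non-trivial linear part, i.e.\ $\operatorname{ord}(Y_{i_0}) = 1$. On the other hand, the isolated singularity assumption gives $Y(0)=0$, so $\operatorname{ord}(Y_i) \geq 1$ for every $i$. Combining,
\[
\nu(\mathcal{F}^{Y},0) \;=\; \min_{i}\operatorname{ord}(Y_i) \;=\; 1.
\]

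There is no genuine obstacle in this argument: the content is entirely contained in Theorem C. The only subtlety worth mentioning is that the definition of $\nu(\mathcal{F}^{Y},0)$ requires a representative vector field with $\gcd(Y_1,\dots,Y_n)=1$, and this is automatic under the standing assumption that $Y$ has an isolated singularity at the origin. Thus the proof reduces to the two-line observation above, and the corollary is really the concrete multiplicity-theoretic reformulation of Theorem C.
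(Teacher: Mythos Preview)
Your argument is correct and matches the paper's intended reasoning: the paper states this corollary without proof, treating it as an immediate consequence of Theorem C, and your unpacking---$\operatorname{Spect}(DY(0))\neq\{0\}$ forces $DY(0)\neq 0$, hence the linear part of $Y$ is nontrivial and $\nu(\mathcal{F}^Y,0)=1$---is exactly the one-line justification the reader is expected to supply.
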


\section{Proof of Theorem D}

\label{sec:other_results}

In this section, we present some characterizations of the invariance of the
algebraic multiplicity. We recall the hypotheses in order to prove Theorem D.

Let $\mathcal{F}$ and $\mathcal{G}$ be two holomorphic foliations by curves at $(\mathbb{C}^n,0)$.

Let $\pi\colon \widehat{\mathbb{C}^n} \to\mathbb{C}^n$ be the quadratic blow
up at $0 \in\mathbb{C}^n$, let $E= \pi^{-1}(0)$ be the exceptional divisor
and let $\widetilde{\mathcal{F}}$ and $\widetilde{\mathcal{G}}$ be the
strict transforms of $\mathcal{F}$ and $\mathcal{G}$ by $\pi$, respectively.

Suppose that there is a topological equivalence $h\colon(\mathbb{C}^{n},0)\rightarrow(\mathbb{C}^{n},0)$ between $\mathcal{F}$ and $\mathcal{G}$. Let $\mathrm{Sep}_{0}(\mathcal{F})$ be the collection of all irreducible
separatrices of $\mathcal{F}$ at $0$. For each $C\in \mathrm{Sep}_{0}(\mathcal{F})$, let $\alpha_{C}$ be the Puiseux's parameterization of $C$ (at 
$0$) and let $\beta_{C}$ be the Puiseux's parameterization of $h(C)$. We
denote by $\widetilde{C}$ and $\widetilde{h(C)}$ the blowing up of $C$ and $h(C)$, respectively.

Here, we assume that $\mathrm{Sep}_{0}(\mathcal{F})\not=\emptyset$.

\vspace{0.5 cm}

\begin{theorem_D}
Suppose that either $n=2$ or $\phi$ is a bi-Lipschitz equivalence. Then the
following statements are equivalent:

\begin{description}
\item[(i)] $\nu(\mathcal{F},0)=\nu(\mathcal{G},0);$

\item[(ii)] $\mathrm{ind}_p(\widetilde{\mathcal{F}}|_{\widetilde{C}}) = 
\mathrm{ind}_q(\widetilde{\mathcal{G}}|_{\widetilde{h(C)}})$ for some $C\in 
\mathrm{Sep}_{0}(\mathcal{F})$, where $\{p\}=E\cap \widetilde{C}$ and $\{q\}=E\cap \widetilde{h(C)}$;

\item[(iii)] $\mathrm{ind}_p(\widetilde{\mathcal{F}}|_{\widetilde{C}}) = 
\mathrm{ind}_q(\widetilde{\mathcal{G}}|_{\widetilde{h(C)}})$ for all $C\in 
\mathrm{Sep}_{0}(\mathcal{F})$, where $\{p\}=E\cap \widetilde{C}$ and $\{q\}=E\cap \widetilde{h(C)}$;

\item[(iv)] $\sum\limits_{C\in \Sigma}\mathrm{ind}_{p_C}(\widetilde{\mathcal{F}}|_{\widetilde{C}})=\sum\limits_{C\in \Sigma}  \mathrm{ind}_{q_C}(\widetilde{\mathcal{G}}|_{\widetilde{h(C)}})$ for any non-empty finite
subset $\Sigma\subset Sep_0(\mathcal{F})$, where $\{p_C\}=E\cap \widetilde{C}
$ and $\{q_C\}=E\cap \widetilde{h(C)}$.
\end{description}
\end{theorem_D}

\begin{proof}
It follows from Lemma \ref{Lemma: Index} that
\begin{equation}
\mathrm{ind}_p(\widetilde{\mathcal{F}}|_{\widetilde{C}}) =\mathrm{ind}_0(\mathcal{F}|_{C})- \operatorname{ord} _{0}\alpha_C\cdot \widetilde{\nu}(X).
\end{equation}
and 
\begin{equation}
\mathrm{ind}_q(\widetilde{\mathcal{G}}|_{\widetilde{h(C)}}) =\mathrm{ind}_0(\mathcal{G}|_{h(C)})- \operatorname{ord} _{0}\beta_C\cdot \widetilde{\nu}(Y),
\end{equation}
where $\{q\}=E\cap \widetilde{h(C)}$. Moreover, if $\Sigma\subset \mathrm{Sep}_{0}(\mathcal{F})$ is a non-empty
finite subset, we have the following

\begin{equation}
\sum\limits_{C\in \Sigma}\mathrm{ind}_p(\widetilde{\mathcal{F}}|_{\widetilde{C}}) = \sum\limits_{C\in \Sigma}\mathrm{ind}_0(\mathcal{F}|_{C})-m(S_{\mathcal{F}},0)\cdot \widetilde{\nu}(X)
\end{equation}
and 
\begin{equation}
\sum\limits_{C\in \Sigma}\mathrm{ind}_q(\widetilde{\mathcal{G}}|_{\widetilde{h(C)}}) = \sum\limits_{C\in \Sigma}\mathrm{ind}_0(\mathcal{G}|_{h(C)})-m(S_{\mathcal{G}},0)\cdot \widetilde{\nu}(Y),
\end{equation}
where 
\begin{equation*}
S_{\mathcal{F}}=\bigcup\limits_{C\in \Sigma} C \quad \mbox{and}\quad S_{\mathcal{G}}=\bigcup\limits_{C\in \Sigma} h(C). 
\end{equation*}

\noindent By the topological invariance of the index, we obtain that $\mathrm{ind}_0(\mathcal{F}|_{C})=\mathrm{ind}_0(\mathcal{G}|_{h(C)})$ for all $C\in \mathrm{Sep}_{0}(\mathcal{F})$. Note that the bi-Lipschitz invariance of the
multiplicity of curves follows from the bi-Lipschitz classification of
curves, which was completed by Pichon and Neumann \cite{N-P}, with previous
contributions by Pham and Teissier \cite{P-T} and Fernandes \cite{Fernandes:2003} (see also Theorem 6.1 in the work of Fernandes and Sampaio 
\cite{bookSing23} for a direct proof). Thus, the multiplicity of complex
analytic curves is a (embedded) topological invariant when either $n=2$ (see 
\cite{Zariski:1932}) or $h$ is bi-Lipschitz. Then $m(S_{\mathcal{F}},0)=m(S_{\mathcal{G}},0)$ and $m(C,0)=m(h(C),0)$ for all $C\in \mathrm{Sep}_{0}(\mathcal{F})$. In particular, $\mathrm{ord}_{0} \alpha_C = \operatorname{ord}_{0}
\beta_C$ for all $C\in \mathrm{Sep}_{0}(\mathcal{F})$. Then it is immediate
that $(i)\Leftrightarrow (ii)$ and $(i)\Leftrightarrow (iii)$.

\noindent Moreover, since $\Sigma\subset \mathrm{Sep}_{0}(\mathcal{F})$ is a non-empty
finite subset, it is also immediate that $(i)\Leftrightarrow (iv)$.
\end{proof}

Consequently, we obtain the following result:

\begin{corollary}
Let $\mathcal{F}$ and ${\mathcal{G}}$ be two holomorphic foliations by
curves of open neighbourhoods $U$ and $V$, resp., of $0\in \mathbb{C}^{n}$, $n\geq2$. Suppose that there is a topological equivalence $h\colon U\rightarrow V$ between $\mathcal{F}$ and $\mathcal{G}$
such that $\pi^{-1}\circ h\circ \pi\colon\pi^{-1}(U)\setminus
\pi^{-1}(0)\to\pi^{-1}(V)\setminus \pi^{-1}(0)$ extends to a homeomorphism 
around some singularity $p$ of $\pi^*{\mathcal{F}}$. Assume that $h$ is a bi-Lipschitz homeomorphism whether $n>2$. If there is a
separatrix $S\not \subset \pi^{-1}(0)$ of $\widetilde{\mathcal{F}}$ at $p$, then $\nu(\mathcal{F},0)=\nu(\mathcal{G},0)$.
\end{corollary}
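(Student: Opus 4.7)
The plan is to reduce everything to a direct application of Theorem D by producing a separatrix $C$ of $\mathcal{F}$ at $0$ for which we can verify condition (ii), namely $\mathrm{ind}_p(\widetilde{\mathcal{F}}|_{\widetilde{C}}) = \mathrm{ind}_q(\widetilde{\mathcal{G}}|_{\widetilde{h(C)}})$. The hypotheses of the corollary (either $n=2$ or $h$ bi-Lipschitz) are exactly those required for Theorem D, so its conclusion $\nu(\mathcal{F},0)=\nu(\mathcal{G},0)$ will follow at once.

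First I would set $\tilde h$ to be the local homeomorphism extending $\pi^{-1}\circ h\circ \pi$ around $p$, and $q:=\tilde h(p)$. Because $\tilde h$ coincides with $\pi^{-1}\circ h\circ \pi$ off the exceptional divisor, and because blow-up is an isomorphism outside $E$, the map $\tilde h$ is a topological equivalence between $\widetilde{\mathcal{F}}$ near $p$ and $\widetilde{\mathcal{G}}$ near $q$. Next, since $S\not\subset E$ is an irreducible separatrix of $\widetilde{\mathcal{F}}$ at $p$, the image $C:=\pi(S)$ is a germ of irreducible analytic curve at $0\in\C^n$, invariant by $\mathcal{F}$, and whose strict transform $\widetilde{C}$ coincides with $S$ near $p$. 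In other words, $C\in\mathrm{Sep}_0(\mathcal{F})$ with $\{p\}=E\cap \widetilde{C}$.

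Now I would identify $\tilde h(S)$ with the strict transform $\widetilde{h(C)}$. By Proposition 2.1 (Camacho–Lins Neto–Sad) applied to the topological equivalence $h$, the set $h(C)$ is an irreducible analytic curve invariant by $\mathcal{G}$, so $h(C)\in\mathrm{Sep}_0(\mathcal{G})$. Off the exceptional divisor, $\tilde h(S\setminus \{p\}) = \pi^{-1}(h(C)\setminus\{0\})$, and taking closures in a neighborhood of $q$ yields $\tilde h(S)=\widetilde{h(C)}$, with $\{q\}=E\cap \widetilde{h(C)}$. Then the topological invariance of the index along an invariant curve (Theorem B in \cite{CamachoLS:1984}), applied to the local topological equivalence $\tilde h$ between $\widetilde{\mathcal{F}}$ and $\widetilde{\mathcal{G}}$, gives
\[
\mathrm{ind}_p(\widetilde{\mathcal{F}}|_{\widetilde{C}}) = \mathrm{ind}_p(\widetilde{\mathcal{F}}|_S) = \mathrm{ind}_q(\widetilde{\mathcal{G}}|_{\tilde h(S)}) = \mathrm{ind}_q(\widetilde{\mathcal{G}}|_{\widetilde{h(C)}}),
\]
so condition (ii) of Theorem D holds for this particular $C$. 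Invoking Theorem D (which is applicable because either $n=2$ or $h$ is bi-Lipschitz by hypothesis) yields $\nu(\mathcal{F},0)=\nu(\mathcal{G},0)$.

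The only delicate point in this sketch is the identification $\tilde h(S)=\widetilde{h(C)}$, i.e., verifying that the image of the non-divisor separatrix under the local homeomorphic extension is genuinely the strict transform of $h(C)$ and not some larger invariant set. This is resolved by the fact that $\tilde h$ agrees with $\pi^{-1}\circ h\circ \pi$ outside $E$, combined with the invariance of analyticity of $h(C)$ given by Proposition 2.1; hypothesis $S\not\subset E$ is essential here, as it guarantees $S\setminus\{p\}$ lies in the domain where $\tilde h$ is the literal lift, so the image is determined by $h(C)$ rather than by any behavior along the divisor.
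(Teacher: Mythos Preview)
Your proposal is correct and follows exactly the route the paper intends: the corollary is stated immediately after Theorem D with only the word ``Consequently'', and your argument supplies precisely the missing verification of condition (ii) of Theorem D via the local extension $\tilde h$ and the topological invariance of the index along a separatrix. The identification $\tilde h(S)=\widetilde{h(C)}$ and the fact that $q=\tilde h(p)\in E$ (by continuity of $\pi\circ\tilde h=h\circ\pi$) are the only points requiring care, and you handle them correctly.
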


%
%

\end{document}